\newtheorem{theorem}{Theorem}[section]
\newtheorem{lemma}[theorem]{Lemma}
\newtheorem{cor}[theorem]{Corollary}
\theoremstyle{definition}
\newtheorem{example}[theorem]{Example}
\newtheorem{remark}[theorem]{Remark}
\newtheorem{problem}[theorem]{Problem}
\numberwithin{equation}{section}
\newcommand{\C}{\mathbb{C}}
\newcommand{\D}{\mathbb{D}}
\newcommand{\N}{\mathbb{N}}
\newcommand{\R}{\mathbb{R}}
\newcommand{\T}{\mathbb{T}}
\newcommand{\Z}{\mathbb{Z}}
\newcommand{\cW}{\mathcal{W}}
\newcommand\Tr{\operatorname{Tr}}
\newcommand\inpr[2]{\langle{#1,#2}\rangle}
\newcommand{\hf}{\hat{f}}
\newcommand{\hg}{\hat{g}}
\newcommand{\tf}{\tilde{f}}
\newcommand{\tg}{\tilde{g}}
\newcommand{\tih}{\tilde{h}}
\newcommand\ind{\operatorname{ind}}
\title{A generalized winding number formula for the Witten index of a Toeplitz operator}
\author{Masaki Izumi
\thanks{Supported in part by JSPS KAKENHI Grant Number JP20H01805}\\
Graduate School of Science \\
Kyoto University \\
Sakyo-ku, Kyoto 606-8502, Japan} 
\begin{document} 
\maketitle
\begin{abstract} 
We generalize the winding number formula for the Fredholm index of a Toeplitz operator to the Witten index. 
We also show trace formulae involving Toeplitz operators and operator monotone functions. 
\end{abstract}
\section{Introduction} 
It is well known that the Toeplitz operator $T_f$ with a continuous symbol $f\in C(\T)$ is a Fredholm operator if and only if 
$f$ has no zeros on the unit circle $\T$. 
For such $T_f$, its Fredholm index $\ind T_f$ is given by $-1$ times the winding number of $f$ 
(see, for example, \cite[Theorem 4.4.3]{A2002}). 
For $f\in C^1(\T)$, this can be paraphrased as the following formula: 
\begin{equation}\label{Find}
\ind T_f=-\frac{1}{2\pi i}\int_{0}^{2\pi}\frac{f'(t)}{f(t)}dt.
\end{equation}

A bounded operator $T\in B(H)$ on a Hilbert space $H$ is said to be almost normal if the self-commutator 
$[T^*,T]=T^*T-TT^*$ is trace class.  
The Witten index $\ind_W T$ of such $T$ is defined by the limit 
$$\ind_W T=\lim_{s\to +\infty}\Tr(e^{-sT^*T}-e^{-sTT^*}),$$
if it exists.  
If moreover $T$ is Fredholm, it is known to coincide with the Fredholm index (see \cite[Theorem 2.5]{GS1988}). 
One of the purposes of this paper is to generalize (\ref{Find}) to the Witten index by replacing 
the integral on the right-hand side with the principal value integral (see Theorem \ref{Witten} for the precise statement). 
It is worth showing such a result in view of the recent interests in the Witten index of Toeplitz operators in the literature 
(see \cite{M2020}, \cite{MSTTW2023}, \cite{ST2019} for example). 

In the celebrated work \cite{Krein1953} in 1953, Krein showed that for self-adjoint operators $A$ and $B$ with a trace class 
difference, there exists a unique integrable function $\xi_{A,B}(x)$ on $\R$, called the spectral shift function, 
such that the Krein formula 
$$\Tr(\varphi(A)-\varphi(B))=\int_\R \varphi'(x)\xi_{A,B}(x)dx$$
holds for $\varphi$ in a certain class of functions.  
It is natural to discuss the Witten index of $T$ in relation to the Krein formula for the pair $(T^*T,TT^*)$  
(see, for example, \cite{CLPS2022}, \cite{GS1988}).  
On the other hand, it is known (see \cite[p.244]{MP1989}) that the spectral shift function $\xi_{T^*T,TT^*}$ 
for an almost normal $T$ can be computed from the Pincus principal function $g_T$ as 
\begin{equation}\label{pfssf}
\xi_{T^*T,TT^*}(x)=\frac{1}{2\pi}\int_0^{2\pi}g_T(\sqrt{x}e^{i\theta})d\theta,\quad \forall x>0.
\end{equation}

The Toeplitz operator $T_f$ with a sufficiently regular symbol $f$ is almost normal and its principal function  
is determined by 
\begin{equation}\label{pf}
g_{T_f}(z)=-\ind(T_f-zI)=\frac{1}{2\pi i}\int_0^\infty \frac{f'(t)}{f(t)-z}dt,\quad \forall z\in \C\setminus f(\T),
\end{equation}
thanks to the Helton-Howe thoery (see \cite[p.244]{MP1989} ,\cite[Proposition 5.2]{HH1973}). 
This means that in principle, the Witten index of $T_f$ should be expressed by integral in terms of the symbol $f$.  
Yet, we do not pursuit this strategy to accomplish our task (cf. Remark \ref{ssf}).  
In fact, we do not need any general results in either the Krein theory or the Helton-Howe theory for our purpose. 
Instead, we directly deduce the trace formula
$$\Tr(\varphi(T_f^*T_f)-\varphi(T_fT_f^*))=\frac{1}{2\pi i}\int_\T \left(\varphi(|f(t)^2|)-\varphi(0)\right)\frac{f'(t)}{f(t)}dt$$ 
(see Theorem \ref{HHtoKF} for the precise statement) from the Helton-Howe formula 
$$\Tr([T_f,T_g])=\frac{1}{2\pi i}\int_0^{2\pi}f(t)dg(t),$$ 
which follows from an elementary argument and does not require the Helton-Howe theory. 
Since the argument in \cite[p.244]{MP1989} to describe the spectral shift function goes through the Helfton-Howe formula, 
it is more natural and efficient to deduce necessary trace formulae directly from the Helton-Howe formula than appeal  
to the Krein theory. 
One advantage of our approach is that we can treat symbols with low regularity (see Theorem \ref{AHHtoKF}).  

In Section 2, we give a proof of the Helton-Howe formula for the Toeplitz operators under a considerably mild regularity assumption, 
and deduce the above mentioned trace formula out of it.  
The contents of this section may be folklore among specialists, and our point is to present explicit formulae and also to treat symbols 
with considerably low regularity.  

As an application of the trace formula, we show in Section 3 the generalized winding number formula for the Witten index and discuss 
some examples.  
In particular, we show in Example \ref{anyv} that every real value can be realized as the Witten index of a Toeplitz operator. 

In Section 4, we extend the trace formulae shown in Section 2 to the operator monotone functions 
(Theorem \ref{pthm}, \ref{apthm}). 
Despite the fact that the Krein theory does not apply to the operator monotone functions usually, we can still do it for 
sufficiently regular symbols thanks to Peller's famous criterion \cite[Chapter 6]{P2003} 
for the Schatten-von Neumann class Hankel operators.

\section{Preliminaries}
\subsection{Notation}
We first fix the basic notation used in this paper.  
Let $H$,$H_1$ and $H_2$ be Hilbert spaces. 
We denote by $B(H_1.H_2)$ (respectively $K(H_1,H_2)$) the set of bounded (respectively compact) operators from 
$H_1$ to $H_2$. 
We write $B(H)=B(H,H)$ and $K(H)=K(H,H)$. 
For $T\in K(H_1,H_2)$, we denote by $\{s_n(T)\}_{n=1}^\infty$ the singular value sequence of $T$, 
that is, the eigenvalues of $(T^*T)^{1/2}$, counting multiplicity, in decreasing order. 
For $p>0$, we set 
$$\|T\|_p=\left(\sum_{n=1}^\infty s_n(T)^p\right)^{1/p},$$
and denote by $S_p(H_1,H_2)$ the Schatten-von Neumann $p$-class 
$$\{T\in K(H_1,H_2);\; \|T\|_p<\infty\}.$$
Our basic references for the Schatten-von Neumann classes are \cite{GK1969} and \cite{S2005}.

For a measurable function $f$ on a measure space and $0<p\leq \infty$, we denote $\|f\|_{L^p}$ by $\|f\|_p$ for simplicity. 

We denote by $\D$ the unit disk $\{z\in \C\;; |z|<1\}$. 
We denote by $dA$ the area measure of $\C$. 
Let $\T=\R/2\pi\Z$, which we often identify with the unit circle $\partial \D$. 
To define function spaces on $\T$, we always use the normalized Lebesgue measure $\frac{dt}{2\pi}$, e.g.,  
$L^p(\T)=L^p(\T,\frac{dt}{2\pi})$. 
\subsection{The Helton-Howe formula for Toeplitz operators}
While the Helton-Howe formula is customarily discussed for $C^\infty$ symbols, we discuss it for symbols with 
least possible regularity. 
For this purpose, we first set up relevant function spaces. 
The contents of this subsection are mostly expository. 

For $f\in L^1(\T)$ and $n\in \Z$, we denote by $\hg(n)$ the Fourier coefficient 
$$\hf(n)=\frac{1}{2\pi}\int_\T f(t)e^{-int}dt.$$ 
Let $H^2$ be the Hardy space 
$$\{f\in L^2(\T);\;\forall n<0,\;\hf(n)=0\}.$$ 
We set $H^\infty=H^2\cap L^\infty(\T)$. 
Let $P_+$ be the projection from $L^2(\T)$ onto $H^2$, called the Riesz projections, and let $P_-=1-P_+$. 
For $f\in L^\infty(\T)$, we denote by $M_f\in B(L^2(\T))$ the multiplication operator $M_fh=fh$ for $h\in L^2(\T)$. 
The Toeplitz operator $T_f\in B(H^2)$ and the Hankel operator $H_f\in B(H^2,{H^2}^\perp)$ are defined by 
$T_f=P_+M_fP_+$ and $H_f=P_-M_fP_+$ respectively. 

For $s>0$, we denote by $W_2^{s}(\T)$ the Sobolev space 
$$W_{2}^s(\T)=\{f\in L_2(\T);\; \sum_{n\in \Z}(1+|n|^{2s})|\hf(n)|^2<\infty\},$$
and define the Sobolev norm by
$$\|f\|_{W_2^s}=\sqrt{\sum_{n\in \Z}(1+|n|^{2s})|\hf(n)|^2}.$$
The space $W_2^{1/2}(\T)$ is particularly important for us.  

It is straightforward to compute the Hilbert-Schmidt norm of $H_f$ as 
$$\|H_f\|_2^2=\sum_{n=0}^\infty\|H_fe_n\|_2^2=\sum_{n=1}^\infty n|\hf(-n)|^2,$$ 
where $e_n(t)=e^{int}$. 
Thus $H_f\in S_2(H^2,{H^2}^\perp)$ if and only if $P_-f\in W_2^{1/2}(\T)$. 
We have 
$$\|H_f\|_2^2+\|H_{\overline{f}}\|_2^2=\sum_{n\in \Z}|n||\hf(n)|^2
=\frac{1}{(2\pi)^2}\int_{\T^2}\frac{|f(s)-f(t)|^2}{|e^{is}-e^{it}|^2}dsdt,$$
where the second equality can be shown by the Fourier expansion. 
In particular,
\begin{equation}
\|f\|_{W_2^{1/2}}^2=\|f\|_2^2+\frac{1}{(2\pi)^2}\int_{\T^2}\frac{|f(s)-f(t)|^2}{|e^{is}-e^{it}|^2}dsdt.
\end{equation}

For $f,g\in W_2^{1/2}(\T)$, we have 
\begin{equation}
T_{fg}-T_fT_g=H_{\overline{f}}^*H_g\in S_1(H^2),
\end{equation}
and 
\begin{equation}
\|[T_f,T_g]\|_1\leq \|f\|_{W_2^{1/2}}\|g\|_{W_2^{1/2}}. 
\end{equation}
Indeed, 
\begin{align*}
\|[T_f,T_g]\|_1&=\|-H_{\overline{f}}^*H_g+H_{\overline{g}}^*H_f\|_1   \\
 &\leq \|H_{\overline{f}}\|_2 \|H_g\|_2+\|H_{\overline{g}}\|_2\|H_f\|_2\\
 &\leq \sqrt{(\|H_f\|_2^2+\|H_{\overline{f}}\|_2^2)(\|H_g\|_2^2+\|H_{\overline{g}}\|_2^2)}\\
 &\leq \|f\|_{W_2^{1/2}}\|g\|_{W_2^{1/2}}. 
\end{align*}

For $f,g\in W_2^{1/2}(\T)$, we define a skew-symmetric form $\omega$ by 
$$\omega(f,g)=\sum_{n\in \Z}n\hf(-n)\hg(n),$$
which satisfies $|\omega(f,g)|\leq \|f\|_{W_2^{1/2}}\|g\|_{W_2^{1/2}}$. 
Then the Helton-Howe formula takes the following form: 
$$\Tr([T_f,T_g])=\omega(f,g).$$ 
Indeed, the formula can be directly verified for trigonometric polynomials, and extends to $W_2^{1/2}(\T)$ by continuity. 

Let $W_1^1(\T)$ be the set of absolutely continuous functions on $\T$. 
Throughout the paper, we mainly work on the Toeplitz operators with symbols in $W_2^{1/2}(\T)\cap W_1^1(\T)$ and 
sometimes in $W_2^{1/2}(\T)\cap L^\infty(\T)$ or $W_2^{1/2}(\T)\cap H^\infty$. 
For the former, the Helton-Howe formula takes the following form:   

\begin{lemma} For $f,g\in W_2^{1/2}(\T)\cap W_1^1(\T)$, we have 
\begin{equation}\label{HH1}
\Tr([T_f,T_g])=\frac{1}{2\pi i}\int_{\T}f(t)g'(t)dt=\frac{-1}{2\pi i}\int_{\T}f'(t)g(t)dt.
\end{equation}
\end{lemma}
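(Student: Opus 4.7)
The plan is to reduce to the already-established identity $\Tr([T_f,T_g])=\omega(f,g)$ for $f,g\in W_2^{1/2}(\T)$ and then show that $\omega(f,g)$ agrees with the stated contour integral whenever $f,g\in W_2^{1/2}(\T)\cap W_1^1(\T)$. The second equality in \eqref{HH1} will then follow either from the skew-symmetry $\omega(f,g)=-\omega(g,f)$ or, equivalently, from integration by parts on $\T$ (the boundary term vanishes by periodicity since elements of $W_1^1(\T)$ are continuous).

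First I would verify the identity $\omega(f,g)=\frac{1}{2\pi i}\int_\T f(t)g'(t)dt$ for trigonometric polynomials by a direct Fourier computation: if $f=\sum_{|m|\le N}\hf(m)e_m$ and $g=\sum_{|n|\le N}\hg(n)e_n$, then $g'=\sum_n in\hg(n)e_n$, and only the $m+n=0$ terms survive in $\frac{1}{2\pi i}\int_\T fg'\,dt$, yielding exactly $\sum_n n\hf(-n)\hg(n)=\omega(f,g)$.

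To extend to $f,g\in W_2^{1/2}(\T)\cap W_1^1(\T)$, I would approximate by Fej\'er means $f_N=\sigma_Nf$ and $g_N=\sigma_Ng$, which are trigonometric polynomials. Two convergence facts are needed. On the algebraic side, the multipliers $(1-|n|/(N+1))_+$ are contractions in $W_2^{1/2}$ and tend to $1$ pointwise, so $f_N\to f$ and $g_N\to g$ in $W_2^{1/2}(\T)$; combined with the bilinear bound $|\omega(u,v)|\le\|u\|_{W_2^{1/2}}\|v\|_{W_2^{1/2}}$ recalled before the lemma, this yields $\omega(f_N,g_N)\to\omega(f,g)$. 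On the analytic side, since $f\in W_1^1(\T)$ is continuous, Fej\'er's theorem gives $f_N\to f$ uniformly, while the identity $g_N'=\sigma_N(g')$ together with $L^1$-convergence of Fej\'er means for the $L^1$ function $g'$ gives $g_N'\to g'$ in $L^1(\T)$. Splitting
\[
\int_\T f_Ng_N'\,dt-\int_\T fg'\,dt=\int_\T (f_N-f)g_N'\,dt+\int_\T f(g_N'-g')\,dt,
\]
the first term is bounded by $\|f_N-f\|_\infty\|g'\|_1\to 0$ and the second by $\|f\|_\infty\|g_N'-g'\|_1\to 0$. Passing to the limit in the polynomial identity gives the first equality in \eqref{HH1}.

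The main obstacle is matching the two different regularity notions in a single approximation scheme: $W_2^{1/2}$ continuity on the algebraic side and $W_1^1$ on the analytic side. Fej\'er summation is the natural common regularization, since it is simultaneously contractive in all the relevant norms and converges to the identity in each of them, avoiding the subtlety that generic partial Fourier sums need not converge in $L^1$. Once this is set up, everything reduces to standard approximation arguments.
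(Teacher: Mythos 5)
Your proof is correct and uses essentially the same mechanism as the paper's: Fej\'er summation to bridge the $W_2^{1/2}$ bilinear bound on $\omega$ with the $L^1$ regularity of $g'$, the key observations being $\hat{g'}(n)=in\hat g(n)$ from absolute continuity and $f\in L^\infty$ from continuity. The only cosmetic difference is that the paper regularizes only $g'$ (then identifies the resulting integral directly with the Ces\`aro means of $\omega(f,g)$), whereas you regularize both $f$ and $g$ and invoke continuity of $\omega$ as a bilinear form.
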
 

\begin{proof} Let $F_n(t)$ be the Fej\'er kernel. 
Since $g'\in L^1(\T)$, the sequence $\{F_n*g'\}_n$ converges to $g'$ in $L^1(\T)$, and   
$$\frac{1}{2\pi i}\int_{\T}f(t)g'(t)dt=\lim_{n\to \infty}\frac{1}{2\pi i}\int_{\T}f(t)(F_n*g')(t)dt,$$
as $f\in L^\infty(\T)$. 
Since $g$ is absolutely continuous, we have $\hat{g'}(n)=in\hg(n)$, and 
$$\frac{1}{2\pi i}\int_{\T}f(t)g'(t)dt=\lim_{n\to \infty}\sum_{k=-n}^n\hf(-k)(1-\frac{|k|}{n})k\hg(k) 
 =\omega(f,g).$$
\end{proof}

\begin{example}
Note that we have the inclusion relations
$$C^1(\T)\subset W_2^1(\T)\subset W_2^{1/2}(\T)\cap W_1^1(\T).$$ 
The family of functions $f_\alpha(t)=(e^{it}+1)^\alpha$, $\alpha> 0$, illustrates the hierarchy of these functions spaces well.  
We have $f_\alpha\in W_2^{1/2}(\T)\cap W_1^1(\T)$ for all $\alpha>0$ (see Example \ref{ExAnBe}).  
We have $f_\alpha\in W_2^1(\T)$ if and only if $\alpha>1/2$, and $f\in C^1(\T)$ if and only if $\alpha\geq 1$.  
\end{example}

As Eq.(\ref{HH1}) does not make sense for symbols in $W_2^{1/2}(\T)\cap L^\infty(\T)$, we use harmonic extension 
to get an integral expression.  
For $f\in W_2^{1/2}(\T)\cap L^\infty(\T)$, we denote by $\tf$ the harmonic extension of $f$ to $\D$, that is, 
$\tf(re^{it})=P_r*f(t)$, where $P_r$ is the Poisson kernel. 
For later use, we present a more general trace formula than the Helton-Howe formula, which is of interest in its own right. 

\begin{lemma}\label{cyclic} Let $f,g\in W_2^{1/2}(\T)\cap L^\infty(\T)$, and let $h\in L^\infty(\T)$. 
Then we have    
\begin{equation}\label{HH2}
\Tr(T_h[T_f,T_g])=\frac{1}{2\pi i}\int_\D \tih d\tf\wedge d \tg.
\end{equation}
\end{lemma}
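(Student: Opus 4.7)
My plan is to verify (\ref{HH2}) for trigonometric polynomial data and then extend by continuity in the trilinear triple $(h,f,g)$. By trilinearity it suffices to test the identity on monomials $h=e_k$, $f=e_n$, $g=e_m$. Since $\tilde{e}_j(z)=z^j$ for $j\geq 0$ and $\tilde{e}_j(z)=\bar z^{-j}$ for $j<0$, both $d\tilde{e}_n\wedge d\tilde{e}_m$ and $[T_{e_n},T_{e_m}]$ vanish unless $n$ and $m$ have opposite signs.

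Take the representative case $n\geq 0>m$. A direct computation on the basis $\{e_j\}_{j\geq 0}\subset H^2$ gives $[T_{e_n},T_{e_m}]e_j=-e_{j+n+m}$ for $\max(0,-n-m)\leq j<-m$ and $0$ otherwise; correspondingly $d\tilde{e}_n\wedge d\tilde{e}_m=-nm\,z^{n-1}\bar z^{-m-1}\,dz\wedge d\bar z$. Both sides vanish unless $k=-n-m$. In that case, the trace equals $-n$ when $-n-m\geq 0$ and $m$ when $-n-m<0$; on the integral side, using $dz\wedge d\bar z=-2i\,dA$ and $\int_\D z^{p-1}\bar z^{q-1}\,dA=\pi\delta_{pq}/p$ for $p,q\geq 1$, one obtains exactly the same two numbers. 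The case $m\geq 0>n$ is symmetric.

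For the continuous extension, both sides need to be bounded by $\|h\|_\infty\|f\|_{W_2^{1/2}}\|g\|_{W_2^{1/2}}$. The LHS bound follows from $\|T_h\|\leq\|h\|_\infty$ together with (1.2). For the RHS, decompose $\tf=\tf_++\tf_-$ into its holomorphic and antiholomorphic parts
\[
\tf_+(z)=\sum_{n\geq 0}\hat f(n)z^n,\qquad \tf_-(z)=\sum_{n\geq 1}\hat f(-n)\bar z^n,
\]
and analogously for $\tg$. A direct expansion yields
\[
d\tf\wedge d\tg=\bigl(\partial_z\tf_+\cdot\partial_{\bar z}\tg_- -\partial_{\bar z}\tf_-\cdot\partial_z\tg_+\bigr)\,dz\wedge d\bar z,
\]
and Parseval gives $\int_\D|\partial_z\tf_+|^2\,dA=\pi\sum_{n\geq 1}n|\hat f(n)|^2\leq\pi\|f\|_{W_2^{1/2}}^2$, with three analogous estimates. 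Cauchy--Schwarz then controls $\|d\tf\wedge d\tg\|_{L^1(\D)}$ by a constant multiple of $\|f\|_{W_2^{1/2}}\|g\|_{W_2^{1/2}}$; combined with the maximum-principle bound $\|\tih\|_{L^\infty(\D)}\leq\|h\|_\infty$, this yields the desired trilinear estimate for the RHS.

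Finally, approximate $f,g$ by their Fej\'er means in the $W_2^{1/2}$-norm and $h$ by $h_N=F_N*h$, so that $\|h_N\|_\infty\leq\|h\|_\infty$ and $h_N\to h$ in every $L^p(\T)$, $p<\infty$. The LHS converges because $[T_{f_N},T_{g_N}]\to[T_f,T_g]$ in $S_1(H^2)$, $T_{h_N}\to T_h$ in the weak operator topology on the uniformly bounded sequence, and a standard $S_1$-duality argument (decomposing $[T_f,T_g]$ as a norm-convergent sum of rank-one operators) gives convergence of the trace pairing. The RHS converges by dominated convergence: $\tih_N\to\tih$ pointwise on $\D$ with $|\tih_N|\leq\|h\|_\infty$, and the previous bound applied to $f-f_N,g-g_N$ yields the $L^1(\D)$-convergence $d\tf_N\wedge d\tg_N\to d\tf\wedge d\tg$. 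The main technical obstacle is precisely this $L^1(\D)$ control of $d\tf\wedge d\tg$, where the holomorphic/antiholomorphic splitting and Parseval bridge the boundary Sobolev regularity $W_2^{1/2}(\T)$ and the interior integral over $\D$.
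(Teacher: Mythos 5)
Your proof is correct and follows essentially the same route as the paper: a holomorphic/antiholomorphic splitting of $d\tf\wedge d\tg$ plus Parseval to get the trilinear bound $\lesssim\|h\|_\infty\|f\|_{W_2^{1/2}}\|g\|_{W_2^{1/2}}$ on both sides, a reduction to trigonometric monomials, and a direct trace computation. The only (harmless) difference is that you also approximate $h$ by Fej\'er means, whereas the paper keeps $h$ general and verifies $\Tr(T_h[T_{e_{-m}},T_{e_n}])=\min\{m,n\}\hat h(m-n)$ directly for arbitrary $h$, which slightly shortens the final convergence argument.
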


\begin{proof} Note that we have 
$$|\Tr(T_h[T_f,T_g])|\leq \|T_h\|\|[T_f,T_g]\|_1\leq \|h\|_\infty\|f\|_{W_2^{1/2}}\|g\|_{W_2^{1/2}}.$$

For $z\in \D$, we set  
$$F_+(z)=\sum_{n=0}^\infty \hf(n)z^n,\quad F_-(z)=\sum_{n=1}^\infty\overline{\hf(-n)}z^n.$$
$$G_+(z)=\sum_{n=0}^\infty \hg(n)z^n,\quad G_-(z)=\sum_{n=1}^\infty\overline{\hg(-n)}z^n.$$
Then $\tf(z)=F_+(z)+\overline{F_-(z)}$, $\tg(z)=G_+(z)+\overline{G_-(z)}$, and  
$$d\tf\wedge d\tg=dF_+\wedge d\overline{G_-}+d\overline{F_-}\wedge dG_-=(F_+'\overline{G_-'}-\overline{F_-'}G_+')
dz\wedge d\overline{z}.$$
Note that we have 
$$\frac{1}{\pi}\int_\D|F_+'(z)|^2dA(z)=\sum_{n=0}^\infty|n||\hf(n)|^2<\infty,$$
and a similar statement holds for $F_-$, $G_+$, and $G_-$. 
Thus we get 
\begin{align*}
\left|\frac{1}{2\pi i}\int_\D \tih d\tf\wedge d \tg\right| 
&=\left|\frac{1}{\pi}\int_\D \tih(z)(\overline{F_-'(z)}G_+'(z)-F_+'(z)\overline{G_-'(z)})dA(z)\right|\\
 &\leq \|h\|_\infty\|f\|_{W_2^{1/2}}\|g\|_{W_2^{1/2}}.
\end{align*}

The above two estimates show that it suffices to prove the statement for trigonometric polynomials $f$ and $g$, 
and furthermore it suffices to show 
$$\Tr(T_h[T_{e_{-m}},T_{e_{n}}])=\frac{mn}{\pi}\int_\D \tih(z)z^{n-1}\overline{z}^{m-1}dA(z),$$
for $m,n>0$.  
The left-hand side is 
$$\sum_{j=\max\{0,m-n\}}^{n-1}\inpr{he_{j+n-m}}{e_j}=\min\{m,n\}\hat{h}(m-n).$$
The right-hand side is 
\begin{align*}
\frac{mn}{\pi}\int_0^1\int_0^{2\pi}\tih(re^{i\theta})r^{m+n-1}e^{i(n-m)\theta}d\theta dr 
&=2mn\hat{h}(m-n)\int_0^1 r^{|n-m|+m+n-1}dr \\
 &=\frac{mn}{\max\{m,n\}}\hat{h}(m-n), 
\end{align*}
which finishes the proof. 
\end{proof}

\begin{remark} When $f$ and $g$ have sufficient regularity (and $h=1$), the two forms of the Helton-Howe formula above 
are, of course, directly connected by the Stokes theorem as suggested in \cite{HH1973}. 
\end{remark}

The space $W_2^{1/2}(\T)\cap L^\infty(\T)$ is known as the Krein algebra (see \cite[Chapter I, Section 8.11]{K2004}).  
As our argument heavily relies on the fact that it is an algebra, we include an elementary proof of it here. 

\begin{lemma} For $f,g\in W_{2}^{1/2}(\T)\cap L^\infty(\T)$, we have 
$$\|fg\|_{W_2^{1/2}}^2\leq 2\|f\|_{W_2^{1/2}}^2\|g\|_\infty^2+2\|f\|_\infty^2\|g\|_{W_2^{1/2}}^2. $$
In particular, the spaces $W_{2}^{1/2}(\T)\cap L^\infty(\T)$ and $W_2^{1/2}(\T)\cap W_1^1(\T)$ are algebras. 
\end{lemma}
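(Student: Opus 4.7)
The plan is to use the integral representation
$$\|f\|_{W_2^{1/2}}^2=\|f\|_2^2+\frac{1}{(2\pi)^2}\int_{\T^2}\frac{|f(s)-f(t)|^2}{|e^{is}-e^{it}|^2}dsdt$$
that has already been established, and then to split the seminorm part of $fg$ into two pieces using the pointwise identity
$$(fg)(s)-(fg)(t)=(f(s)-f(t))g(s)+f(t)(g(s)-g(t)).$$
Applying $|a+b|^2\leq 2|a|^2+2|b|^2$ and pulling out $\|g\|_\infty$ and $\|f\|_\infty$ from the respective integrands yields the bound
$$\frac{1}{(2\pi)^2}\int_{\T^2}\frac{|(fg)(s)-(fg)(t)|^2}{|e^{is}-e^{it}|^2}dsdt\leq 2\|g\|_\infty^2(\|f\|_{W_2^{1/2}}^2-\|f\|_2^2)+2\|f\|_\infty^2(\|g\|_{W_2^{1/2}}^2-\|g\|_2^2).$$

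Second, I would dispose of the $L^2$-term by the elementary estimate $\|fg\|_2^2\leq \|f\|_\infty^2\|g\|_2^2$. Adding this to the inequality above, the negative contributions $-2\|g\|_\infty^2\|f\|_2^2-2\|f\|_\infty^2\|g\|_2^2+\|f\|_\infty^2\|g\|_2^2$ are nonpositive, so they may simply be dropped, leaving the asserted inequality
$$\|fg\|_{W_2^{1/2}}^2\leq 2\|f\|_{W_2^{1/2}}^2\|g\|_\infty^2+2\|f\|_\infty^2\|g\|_{W_2^{1/2}}^2.$$

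For the ``in particular'' assertion, the case of $W_2^{1/2}(\T)\cap L^\infty(\T)$ is immediate from the main inequality together with $\|fg\|_\infty\leq \|f\|_\infty\|g\|_\infty$. For $W_2^{1/2}(\T)\cap W_1^1(\T)$, I would first note that every absolutely continuous function on $\T$ is bounded, so $W_1^1(\T)\subset L^\infty(\T)$ and the $W_2^{1/2}$ bound applies. Closure under multiplication in $W_1^1(\T)$ then follows from the Leibniz rule $(fg)'=f'g+fg'\in L^1(\T)$, using boundedness of $f,g$ and integrability of $f',g'$.

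No serious obstacle is anticipated; the one point requiring a little care is the use of boundedness to move $\|g\|_\infty$ and $\|f\|_\infty$ outside the double integral in the right order (with $g(s)$ controlled by $\|g\|_\infty$ and $f(t)$ by $\|f\|_\infty$), and the pedestrian check that the combined constants in front of $\|f\|_2^2$ and $\|g\|_2^2$ are indeed nonpositive so that they can be discarded to yield the clean stated bound.
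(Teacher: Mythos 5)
Your proof is correct and follows essentially the same route as the paper: both start from the integral representation of the $W_2^{1/2}$-norm, split $(fg)(s)-(fg)(t)$ by the same telescoping identity, apply $|a+b|^2\leq 2|a|^2+2|b|^2$ and pull out the $L^\infty$ factors, and then absorb the $L^2$ term. The only cosmetic difference is that the paper bounds $\|fg\|_2^2$ by $\|f\|_2^2\|g\|_\infty^2$ rather than $\|f\|_\infty^2\|g\|_2^2$, which changes nothing.
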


\begin{proof} Let $f,g\in W_2^{1/2}(\T)\cap L^\infty(\T)$.  
Since 
$$|f(s)g(s)-f(t)g(t)|\leq |f(s)-f(t)|\|g\|_{\infty}+\|f\|_\infty|g(s)-g(t)|,$$
we have
\begin{align*}
\lefteqn{\|fg\|_{W_2^{1/2}}^2=\|fg\|_2^2+\frac{1}{(2\pi)^2}\int_{\T^2}\frac{|f(s)g(s)-f(t)g(t)|^2}{|e^{is}-e^{it}|^2}dsdt } \\
 &\leq \|f\|_2^2\|g\|_\infty^2
 +\frac{1}{(2\pi)^2}\int_{\T^2}\frac{2|f(s)-f(t)|^2\|g\|_\infty^2+2\|f\|_\infty^2|g(s)-g(t)|^2}{|e^{is}-e^{it}|^2}dsdt \\
 &=\|f\|_2^2\|g\|_\infty^2+2\sum_{n\in \Z}|n||\hf(n)|^2\|g\|_\infty^2
 +2\|f\|_\infty^2\sum_{n\in \Z}|n||\hg(n)|^2 \\
&\leq 2\|f\|_{W_2^{1/2}}^2\|g\|_\infty^2+2\|f\|_\infty^2\|g\|_{W_2^{1/2}}^2.  
\end{align*}

Since $W_1^1(\T)$ and $W_{2}^{1/2}(\T)\cap L^\infty(\T)$ are algebras, so is $W_2^{1/2}(\T)\cap W_1^1(\T)$. 
\end{proof}

\subsection{Trace formulae for $\Tr(\varphi(T_f^*T_f)-\varphi(T_fT_f^*))$}
Although we do not use any general results in the Krein theory of spectral shift functions in this paper,  
it is convenient for us to use its setup. 
For the Krein theory, the reader is referred to \cite[Chapter 9]{S2012}.

We denote by $\cW_1(\R)$ the set of functions $f\in C^1(\R)$ such that $f'$ is the Fourier transform of a (finite) complex Borel measure on $\R$.  
Let $A$ and $B$ be self-adjoint operators acting on a Hilbert space $H$. 
For simplicity, we assume that $A$ and $B$ are bounded. 
If $A-B\in S_1(H)$, the Krein theory shows that there exists a unique function $\xi_{A,B}\in L^1(\R)$, 
called the spectral shift function, such that 
\begin{equation}\label{tc}
\varphi(A)-\varphi(B)\in S_1(H),
\end{equation}
for all $\varphi\in \cW_1(\R)$, and the following Krein formula holds: 
\begin{equation}\label{KF}
\Tr(\varphi(A)-\varphi(B))=\int_\R \varphi'(x)\xi_{A,B}(x)dx.
\end{equation}

For a finite closed interval $[a,b]$, we denote by $\cW_1[a,b]$ the set of $f\in C^1[a,b]$ that extends to a function in 
$\cW_1(\R)$. 
Note that if $f\in C^1[a,b]$ and $f'$ extends to a periodic function with an absolutely convergent Fourier series, 
then $f\in \cW_1[a,b]$. 
In particular, if $f'$ satisfies either of the following conditions, 
\begin{itemize}
\item [(1)] $f'$ is absolutely continuous and $f''\in L^2[a,b]$, i.e. $f\in W_2^2[a,b]$,
\item [(2)] $f'$ is $\alpha$-H\"older continuous with some exponent $\alpha>1/2$, 
\item [(3)] $f'$ is of bounded variation and $\alpha$-H\"older continuous with some exponent $\alpha>0$, 
\end{itemize}
then $f\in \cW_1[a,b]$ (see \cite[Chapter I, Section 6]{K2004}). 
For example, the function $x^q$ belongs to $\cW_1[0,a]$ if and only if $q\geq 1$.  

It is known that Eq.(\ref{tc}) and (\ref{KF}) hold for a broader class of functions than $\cW_1(\R)$ 
(see \cite[Section 4.2]{BY1993}), but the function space $\cW_1(\R)$ is enough for our purpose. 

Let $f\in W_2^{1/2}(\T)\cap W_1^1(\T)$. 
Then since $T_f$ is almost normal, the Krein theory is applicable to the pair $(T_f^*T_f,T_fT_f^*)$. 
Our task now is to give an integral expression in terms of $f$ and $\varphi$ of the trace 
$\Tr(\varphi(T_f^*T_f)-\varphi(T_fT_f^*))$ without using the Krein theory. 

\begin{lemma}\label{holomorphic} 
Let $f\in W_{2}^{1/2}(\T)\cap W_1^{1}(\T)$, and let $\varphi$ be a holomorphic function 
defined on $\{z\in \C;|z|<r\}$ with $\|f\|_\infty^2<r$. 
Then we have $\varphi(T_f^*T_f)-\varphi(T_fT_f^*)\in S(H^2)_1$, and 
$$\Tr(\varphi(T_f^*T_f)-\varphi(T_fT_f^*))=\frac{1}{2\pi i}\int_\T \Phi(|f(t)^2|)\overline{f(t)}f'(t)dt,$$
where 
$$\Phi(x)=\left\{
\begin{array}{ll}
\frac{\varphi(x)-\varphi(0)}{x} , &\quad x\neq 0  \\
\varphi'(0) , &\quad x=0
\end{array}
\right.
$$

\end{lemma}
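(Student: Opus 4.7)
The plan is to expand $\varphi$ into its power series $\varphi(z)=\sum_{n=0}^\infty a_n z^n$ on $\{|z|<r\}$, reduce the trace formula to the monomial case $\varphi(z)=z^n$ ($n\ge 1$), and handle each monomial via the Helton--Howe formula (\ref{HH1}). Since $\|T_f^*T_f\|,\|T_fT_f^*\|\le \|f\|_\infty^2<r$, the operator-norm convergent series
$$\varphi(T_f^*T_f)-\varphi(T_fT_f^*)=\sum_{n\ge 1}a_n\bigl[(T_f^*T_f)^n-(T_fT_f^*)^n\bigr]$$
is available. Combining the telescoping identity $A^n-B^n=\sum_{k=0}^{n-1}A^{n-1-k}(A-B)B^k$ (with $A=T_f^*T_f$, $B=T_fT_f^*$, and $A-B=[T_f^*,T_f]\in S_1(H^2)$) with the trivial bound $\|A\|,\|B\|\le\|f\|_\infty^2$, one gets
$$\|(T_f^*T_f)^n-(T_fT_f^*)^n\|_1\le n\|f\|_\infty^{2(n-1)}\|[T_f^*,T_f]\|_1.$$
Because $\sum_n n|a_n|\|f\|_\infty^{2(n-1)}<\infty$ (the power series for $\varphi'$ converges absolutely at $\|f\|_\infty^2$), the series above also converges in $S_1(H^2)$; in particular $\varphi(T_f^*T_f)-\varphi(T_fT_f^*)\in S_1(H^2)$, and its trace may be computed term by term.

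To evaluate each $\Tr\bigl[(T_f^*T_f)^n-(T_fT_f^*)^n\bigr]$, I would use $T^*(TT^*)^{n-1}=(T^*T)^{n-1}T^*$, which gives
$$(T_f^*T_f)^n-(T_fT_f^*)^n=[A_n,T_f],\qquad A_n:=(T_{\bar f}T_f)^{n-1}T_{\bar f}.$$
The crucial claim to establish is
$$A_n\equiv T_{\bar f|f|^{2(n-1)}}\pmod{S_1(H^2)}.$$
This follows by induction from repeated application of (2.2): $T_{\bar f}T_f\equiv T_{|f|^2}\pmod{S_1}$, and the algebra property of $W_2^{1/2}(\T)\cap L^\infty(\T)$ lets one conclude $(T_{\bar f}T_f)^{n-1}\equiv T_{|f|^{2(n-1)}}$ and finally $A_n\equiv T_{|f|^{2(n-1)}}T_{\bar f}\equiv T_{\bar f|f|^{2(n-1)}}$ modulo $S_1$. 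Writing $A_n=T_{\bar f|f|^{2(n-1)}}+S$ with $S\in S_1(H^2)$, we split $[A_n,T_f]=[T_{\bar f|f|^{2(n-1)}},T_f]+[S,T_f]$; the second summand has trace zero by cyclicity, since both $ST_f$ and $T_fS$ lie in $S_1(H^2)$. Because $\bar f|f|^{2(n-1)}=\bar f(f\bar f)^{n-1}$ belongs to the algebra $W_2^{1/2}(\T)\cap W_1^1(\T)$, the Helton--Howe formula (\ref{HH1}) yields
$$\Tr\bigl[(T_f^*T_f)^n-(T_fT_f^*)^n\bigr]=\frac{1}{2\pi i}\int_\T\bar f(t)|f(t)|^{2(n-1)}f'(t)\,dt.$$

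Summing over $n$ and interchanging sum and integral --- justified by the uniform bound $\sum_n|a_n|\|f\|_\infty^{2(n-1)}<\infty$ together with $\bar f f'\in L^1(\T)$ --- gives the stated formula, since $\Phi(|f(t)|^2)=\sum_{n\ge 1}a_n|f(t)|^{2(n-1)}$ pointwise. I expect the modular identity $A_n\equiv T_{\bar f|f|^{2(n-1)}}\pmod{S_1}$ to be the main technical hurdle: one has to track trace-class remainders across a product of $n$ Toeplitz factors, and the argument runs smoothly only because of the algebra property of $W_2^{1/2}(\T)\cap L^\infty(\T)$ and the inclusion $W_1^1(\T)\subset L^\infty(\T)$, which places the symbol $f$ in this algebra.
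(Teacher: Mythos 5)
Your proposal is correct and follows essentially the same route as the paper's proof: power-series expansion of $\varphi$, the commutator identity $(T_f^*T_f)^n-(T_fT_f^*)^n=[(T_f^*T_f)^{n-1}T_f^*,T_f]$, the trace-norm bound $n\|f\|_\infty^{2(n-1)}\|[T_f^*,T_f]\|_1$ to justify termwise trace, the reduction $(T_f^*T_f)^{n-1}T_f^*\equiv T_{|f|^{2(n-1)}\bar f}\pmod{S_1}$ via the Krein-algebra property, and finally the Helton--Howe formula (\ref{HH1}). The only cosmetic difference is that you spell out the telescoping identity and the induction step explicitly, whereas the paper states those facts more tersely.
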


\begin{proof}
With $a_n=\varphi^{(n)}(0)/n!$, we have 
$$\varphi(T_f^*T_f)-\varphi(T_fT_f^*)=\sum_{n=0}^\infty a_n\left( (T_f^*T_f)^n-(T_fT_f^*)^n\right),$$
where the convergence is in the operator norm. 
We first claim that it actually converges in the trace norm too. 
Indeed, since 
$$(T_f^*T_f)^n-(T_fT_f^*)^n=[(T_f^*T_f)^{n-1}T_f^*,T_f],$$ 
we get 
$$\|(T_f^*T_f)^n-(T_fT_f)^n\|_1\leq n\|[T_f^*,T_f]\|_1\|T_f\|^{2n-2}\leq n\|f\|_{W_2^{1/2}}^2\|f\|_\infty^{2n-2},$$
which shows the claim as the radius of convergence of the Taylor series for $\varphi$ is strictly larger than $\|f\|_\infty^2$. 
Thus $\varphi(T_f^*T_f)-\varphi(T_fT_f^*)\in S_1(H^2)$. 

Since $(T_f^*T_f)^{n-1}T_f^*-T_{|f|^{n-1}\overline{f}}\in S_1(H^2)$, the Helton-Howe formula implies 
$$\Tr([(T_f^*T_f)^{n-1}T_f^*,T_f])=\Tr([T_{|f|^{2(n-1)}\overline{f}},T_f])
=\frac{1}{2\pi i}\int_\T |f(t)|^{2(n-1)}\overline{f(t)}f'(t)dt,$$ 
and so 
$$\Tr(\varphi(T_f^*T_f)-\varphi(T_fT_f^*))=\frac{1}{2\pi i}\sum_{n=1}^\infty a_n \int_\T |f(t)|^{2(n-1)}\overline{f(t)}f'(t)dt.$$
Since 
$$\sum_{n=1}^\infty a_n|f(t)|^{2n-2}=\Phi(|f(t)|^2)$$
converges uniformly on $\T$, we get the statement.  
\end{proof}

In what follows, for simplicity we often write 
$$\Phi(|f(t)|^2)\overline{f(t)}f'(t)=\left(\varphi(|f(t)|^2)-\varphi(0)\right)\frac{f'(t)}{f(t)},$$
with convention that the right-hand side is 0 whenever $f(t)=0$.

\begin{theorem}\label{HHtoKF} 
Let $f\in W_{2}^{1/2}(\T)\cap W_1^{1}(\T)$. 
Then the following holds for every $\varphi\in \cW_1[0,\|f\|_\infty^2]$:
\begin{equation}\label{KFfT}
\Tr(\varphi(T_f^*T_f)-\varphi(T_fT_f^*))=\frac{1}{2\pi i}\int_\T \left(\varphi(|f(t)^2|)-\varphi(0)\right)\frac{f'(t)}{f(t)}dt. 
\end{equation}
\end{theorem}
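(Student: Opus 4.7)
The plan is to deduce Theorem \ref{HHtoKF} from Lemma \ref{holomorphic} by approximating a general $\varphi\in\cW_1[0,a]$ (with $a:=\|f\|_\infty^2$) by entire functions via the defining integral representation of $\cW_1(\R)$. Since both sides of (\ref{KFfT}) are invariant under adding a constant to $\varphi$, I assume $\varphi(0)=0$. Fix an extension $\tilde\varphi\in\cW_1(\R)$ with $\tilde\varphi(0)=0$ and $\tilde\varphi'=\hat\mu$ for a finite complex Borel measure $\mu$, so
\[
  \tilde\varphi(x)=\int_\R\frac{e^{ix\xi}-1}{i\xi}\,d\mu(\xi).
\]
Truncate by $\mu_n:=\chi_{[-n,n]}\mu$ and set $\varphi_n(x):=\int_\R\frac{e^{ix\xi}-1}{i\xi}\,d\mu_n(\xi)$. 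Since $\mu_n$ is compactly supported, each $\varphi_n$ extends to an entire function on $\C$, so Lemma \ref{holomorphic} applies and yields (\ref{KFfT}) for every $\varphi_n$. The remaining task is to pass to the limit $n\to\infty$ on both sides.

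Dominated convergence on the total variation measure $|\mu|$ gives $\|\mu-\mu_n\|_{TV}\to 0$, whence $\varphi_n\to\varphi$ and $\varphi_n'\to\varphi'$ uniformly on $[0,a]$. For the right-hand side the integrands, written $\frac{\varphi_n(|f|^2)}{|f|^2}\,\overline{f}f'$, are uniformly bounded by $\|\mu\|_{TV}|\overline{f}f'|\in L^1(\T)$ and converge pointwise, so dominated convergence delivers the limit.

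For the left-hand side, trace-norm convergence follows from a direct Duhamel calculation. Differentiating $t\mapsto e^{i\xi(1-t)A}e^{i\xi tB}$ and integrating over $[0,1]$ gives, for bounded self-adjoint $A,B$,
\[
  \frac{e^{i\xi A}-e^{i\xi B}}{i\xi}=\int_0^1 e^{i\xi(1-t)A}(A-B)e^{i\xi tB}\,dt.
\]
Specializing to $A=T_f^*T_f$, $B=T_fT_f^*$, so that $A-B=[T_f^*,T_f]\in S_1(H^2)$ with $\|[T_f^*,T_f]\|_1\leq\|f\|_{W_2^{1/2}}^2$, and integrating against a finite complex Borel measure $d\nu$, I obtain for every $\psi\in\cW_1(\R)$ with $\psi(0)=0$ and $\psi'=\hat\nu$,
\[
  \psi(T_f^*T_f)-\psi(T_fT_f^*)=\int_\R\!\int_0^1 e^{i\xi(1-t)T_f^*T_f}[T_f^*,T_f]\,e^{i\xi tT_fT_f^*}\,dt\,d\nu(\xi),
\]
and hence $\|\psi(T_f^*T_f)-\psi(T_fT_f^*)\|_1\leq\|\nu\|_{TV}\|f\|_{W_2^{1/2}}^2$. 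Applying this estimate with $\psi=\varphi-\varphi_n$ and $\nu=\mu-\mu_n$ furnishes the desired trace-norm convergence and closes the argument.

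The main obstacle is this last trace-norm estimate: a general Krein-theoretic bound would supply it at once, but because the paper's stated policy is to avoid Krein theory, I derive it from first principles via the Duhamel identity. The truncation $\mu\mapsto\mu_n$ serves the dual purpose of reducing to the entire (hence holomorphic) case covered by Lemma \ref{holomorphic} while keeping the total variation norm controlled so that the Duhamel bound produces a genuine limit.
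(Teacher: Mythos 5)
Your proof is correct and follows essentially the same strategy as the paper: both start from the integral representation $\varphi(x)=\varphi(0)+\int_\R\frac{e^{ix\xi}-1}{i\xi}\,d\mu(\xi)$ defining $\cW_1$, reduce to the holomorphic case handled by Lemma \ref{holomorphic}, and justify the trace identity by a convergence argument. The difference is in how the limit interchange is organized. The paper writes $\varphi(T_f^*T_f)-\varphi(T_fT_f^*)=\int_\R\frac{1}{iy}(e^{iyT_f^*T_f}-e^{iyT_fT_f^*})\,d\mu(y)$, cites Schm\"udgen for trace-norm convergence of this integral, pulls the trace inside, applies Lemma \ref{holomorphic} to $x\mapsto e^{iyx}$ for each fixed $y$, and finishes with Fubini on the resulting double integral over $\R\times\T$. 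You instead truncate the measure to $\mu_n=\chi_{[-n,n]}\mu$, obtain entire $\varphi_n$ to which Lemma \ref{holomorphic} applies directly, and pass to the limit: dominated convergence on the $\T$-side, and a Duhamel-derived bound $\|\psi(T_f^*T_f)-\psi(T_fT_f^*)\|_1\leq\|\nu\|_{TV}\|f\|_{W_2^{1/2}}^2$ on the operator side. Your Duhamel computation is correct and is a genuine improvement in self-containedness, since it replaces the citation to \cite[Lemma 9.26]{S2012} with an elementary derivation; this is in keeping with the paper's stated aim of avoiding the Krein machinery. The one small point worth making explicit when you invoke dominated convergence on the right-hand side is that the pointwise limit of $\frac{\varphi_n(|f|^2)}{|f|^2}\overline{f}f'$ is taken a.e., in particular off the (measure-zero, after the convention $0$ at $f(t)=0$) zero set of $f$; the uniform bound $|\varphi_n(x)/x|\leq\|\mu\|_{TV}$ together with $\overline{f}f'\in L^1(\T)$ then closes that side, as you say.
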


\begin{proof} From the definition of $\cW_1[0,\|f\|_\infty]$, there exists a complex measure $\mu$ on $\R$ such that 
$$\varphi(x)=\varphi(0)+\int_\R \frac{e^{ixy}-1}{iy}d\mu(y),$$
and 
$$\varphi(T_f^*T_f)-\varphi(T_fT_f^*)=\int_\R \frac{1}{iy}(e^{iy T_f^*T_f}-e^{iy T_fT_f^*})d\mu(y).$$
Note that the right-hand side converges in the trace norm (see \cite[Lemma 9.26]{S2012}). 
Thus 
$$\Tr(\varphi(T_f^*T_f)-\varphi(T_fT_f^*))=\int_\R \frac{1}{iy}
\Tr(e^{iy T_f^*T_f}-e^{iy T_fT_f^*})d\mu(y).$$
Now the statement follows from the previous lemma and the Fubini theorem. 
\end{proof}

\begin{cor}
Let $f\in W_{2}^{1/2}(\T)\cap W_1^{1}(\T)$. 
Then the following holds for every $\varphi\in \cW_1[0,\|f\|_\infty^2]$:
\begin{equation}\label{KFD}
\Tr(\varphi(T_f^*T_f)-\varphi(T_fT_f^*))=\frac{1}{2\pi i}\int_\D \varphi'(|\tf|^2)d\overline{\tf}\wedge d\tf.
\end{equation}
\end{cor}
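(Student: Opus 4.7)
The plan is to start from Theorem~\ref{HHtoKF} and convert its boundary integral over $\T$ into the area integral over $\D$ by applying Stokes' theorem to the harmonic extension $\tf$. With the function
$$\Phi(x)=\frac{\varphi(x)-\varphi(0)}{x}\quad(x>0),\qquad \Phi(0)=\varphi'(0)$$
introduced in Lemma~\ref{holomorphic}, Theorem~\ref{HHtoKF} reads
$$\Tr(\varphi(T_f^*T_f)-\varphi(T_fT_f^*))=\frac{1}{2\pi i}\int_0^{2\pi}\Phi(|f(t)|^2)\,\overline{f(t)}\,f'(t)\,dt,$$
so the task is to show that this coincides with $\frac{1}{2\pi i}\int_\D\varphi'(|\tf|^2)\,d\overline{\tf}\wedge d\tf$.

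The key point is to identify a 1-form on $\D$ whose exterior derivative produces the desired 2-form and whose pullback to $\partial\D$ reproduces the boundary integrand above. I would try
$$\omega=\Phi(|\tf|^2)\,\overline{\tf}\,d\tf.$$
Using $d(|\tf|^2)=\overline{\tf}\,d\tf+\tf\,d\overline{\tf}$ together with $d\tf\wedge d\tf=0$, a short computation yields
$$d\omega=\bigl(\Phi'(|\tf|^2)|\tf|^2+\Phi(|\tf|^2)\bigr)\,d\overline{\tf}\wedge d\tf=\varphi'(|\tf|^2)\,d\overline{\tf}\wedge d\tf,$$
where the last equality is the identity $(u\Phi(u))'=\varphi'(u)$ built into the very definition of $\Phi$. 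Pulling $\omega$ back to $\partial\D$ under the parametrization $t\mapsto e^{it}$ gives exactly $\Phi(|f(t)|^2)\overline{f(t)}f'(t)\,dt$, matching the boundary integrand.

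Since $\tf$ is only smooth on the open disk, I would apply Stokes' theorem on $\D_r=\{|z|<r\}$ for $r<1$ and then let $r\to 1^-$. For the area integral, absolute integrability on $\D$ follows from $|\varphi'|$ being bounded on $[0,\|f\|_\infty^2]$ (thanks to $\varphi\in\cW_1$) together with $|F_+'|^2,|F_-'|^2\in L^1(\D,dA)$ (as used in the proof of Lemma~\ref{cyclic}), so dominated convergence applies. For the boundary integral, I would parametrize $\partial\D_r$ by $t\mapsto re^{it}$, note that $\tf(re^{it})=P_r*f(t)$ and $\partial_t\tf(re^{it})=P_r*f'(t)$ (using $f\in W_1^1(\T)$, which lets $\partial_t$ commute with $P_r*$), and then use $P_r*f\to f$ uniformly on $\T$ (since $f$ is continuous) together with $P_r*f'\to f'$ in $L^1(\T)$ (Poisson kernel as approximate identity) to conclude convergence to the desired boundary integral. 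I expect this boundary limit to be the main obstacle; it rests squarely on the absolute continuity of $f$, which is exactly what allows the tangential derivative of $\tf$ to be exchanged with $P_r*f'$ and the limit passage to succeed.
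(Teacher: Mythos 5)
Your argument is essentially the paper's: the paper likewise handles the smooth case by Theorem~\ref{HHtoKF} plus Stokes, then passes to general $f$ via the dilates $f_r(t)=\tf(re^{it})$, using $(f_r)'=(f')_r$ (absolute continuity of $f$), uniform convergence $f_r\to f$, $L^1$-convergence $(f_r)'\to f'$, and dominated convergence on the area integral. Your 1-form $\omega=\Phi(|\tf|^2)\overline{\tf}\,d\tf$ makes the Stokes step explicit, which the paper leaves as a remark; the only point worth flagging is that $\Phi$ itself need not be $C^1$ at $0$ when $\varphi\in\cW_1$ is merely $C^1$, but the composite $w\mapsto\Phi(|w|^2)\overline{w}=(\varphi(|w|^2)-\varphi(0))/w$ \emph{is} $C^1$ with $\partial_{\overline w}$-derivative $\varphi'(|w|^2)$, so $\omega$ is a $C^1$ form and the conclusion $d\omega=\varphi'(|\tf|^2)\,d\overline{\tf}\wedge d\tf$ stands.
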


\begin{proof} If $f$ is smooth, the statement follows from the previous theorem together with the Stokes theorem. 
For general $f$ and $0<r<1$, let $f_r(t)=\tf(re^{it})$. 
Since $f$ is absolutely continuous, we have $(f_r)'=(f')_r$, and 
$$\lim_{r\to 1-0}\|(f_r)'-f'\|_1=0,\quad \lim_{r\to 1-0}\|f_r-f\|_\infty=0.$$
Thus \begin{align*}
\lefteqn{\Tr(\varphi(T_f^*T_f)-\varphi(T_fT_f^*))
=\lim_{r\to 1-0} \frac{1}{2\pi i} \int_\T \Phi(|f_r(t)|^2)\overline{f_r(t)}(f_r)'(t)dt} \\
 &=\lim_{r\to 1-0} \frac{1}{2\pi i}\int_\D \varphi'(|\tilde{f_r}|^2)d\overline{\tilde{f_r}}\wedge d\tilde{f_r}
 =\lim_{r\to 1-0} \frac{1}{2\pi i}\int_{|z|\leq r} \varphi'(|\tf|^2)d\overline{\tf}\wedge d\tf.
\end{align*}
Since $\varphi'(|\tf(z)|^2)$ is bounded and the form $d\overline{\tf}\wedge d\tf$ is integrable on $\D$, 
we get the statement. 
\end{proof}

\begin{remark}\label{ssf} 
We can describe the spectral shift function $\xi_f:=\xi_{T_f^*T_f,T_fT_f^*}$ for $f\in W_2^{1/2}(\T)\cap W_1^1(\T)$ as 
\begin{equation}\label{SSF}
\xi_f(x)=\frac{1}{2\pi i}\int_\T 1_{(0,|f(t)|^2]}(x)\frac{f'(t)}{f(t)}dt,
\end{equation}
where $1_X$ is the indicator function of $X\subset \R$. 
This can be shown by comparing the Fourier transform of the right-hand side with Lemma \ref{holomorphic}.
Alternatively, the method explained in Introduction shows 
$$\xi_f(x)=\frac{1}{2\pi}\int_\T\frac{1}{2\pi i}\int_\T\frac{f'(t)}{f(t)-\sqrt{x}e^{i\theta}}dtd\theta.$$
If the Fubini theorem is applicable to this iterated integral, we get the same formula.  
\end{remark}

Now we relax the regularity of the symbol $f$ while imposing analyticity. 
We say that a symbol $f\in W_2^{1/2}(\T)\cap L^\infty(\T)$ is analytic if $f\in H^\infty$.  
For analytic $f$, we have $[T_f^*,T_f]=H_{\overline{f}}^*H_{\overline{f}}\geq 0$. 

\begin{lemma}
Let $f\in W_{2}^{1/2}(\T)\cap H^\infty$, and let $F(z)=\tf(z)$ be the holomorphic extension of $f$ to $\D$. 
Let $\varphi$ be a holomorphic function defined on $\{z\in \C;|z|<r\}$ with $\|f\|_\infty^2<r$. 
Then we have 
$$\Tr(\varphi(T_f^*T_f)-\varphi(T_fT_f^*))=\frac{1}{\pi}\int_\D \varphi'(|F(z)|^2) |F'(z)|^2dA(z).$$
\end{lemma}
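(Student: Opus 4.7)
The plan is to reduce to the preceding Corollary by dilation approximation. For $0<r<1$, set $f_r(t) := F(re^{it})$; then $f_r$ is smooth on $\T$ and analytic, so $f_r \in W_2^{1/2}(\T) \cap W_1^1(\T)$, and the Corollary applies to $f_r$. Taking $r \to 1-0$ on both sides of the resulting identity will yield the lemma, once convergence is verified on each side.

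For the integral side, uniqueness of bounded harmonic extensions gives $\tilde{f_r}(z) = F(rz)$, which is holomorphic, so the two-form collapses to $d\overline{\tilde{f_r}} \wedge d\tilde{f_r} = 2ir^2|F'(rz)|^2\,dA(z)$. The change of variables $w = rz$ then turns the Corollary into
\begin{equation*}
\Tr(\varphi(T_{f_r}^*T_{f_r})-\varphi(T_{f_r}T_{f_r}^*)) = \frac{1}{\pi}\int_{|w|<r}\varphi'(|F(w)|^2)|F'(w)|^2\,dA(w).
\end{equation*}
As $r\to 1-0$ this right-hand side tends to $\frac{1}{\pi}\int_\D \varphi'(|F|^2)|F'|^2\,dA$ by dominated convergence, using boundedness of $\varphi'$ on $\{|z|\leq \|f\|_\infty^2\}$ and the Dirichlet-space bound $\int_\D|F'|^2\,dA = \pi\sum_{n\geq 1}n|\hf(n)|^2 < \infty$, which follows from $f\in W_2^{1/2}(\T)$.

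For the trace side, the Taylor-expansion technique of Lemma \ref{holomorphic} gives a trace-norm convergent series $\varphi(T_g^*T_g)-\varphi(T_gT_g^*) = \sum_n a_n[(T_g^*T_g)^{n-1}T_g^*, T_g]$ whose $n$-th term is bounded by $|a_n|\cdot n\|g\|_{W_2^{1/2}}^2\|g\|_\infty^{2n-2}$. Since $\|f_r\|_\infty \leq \|f\|_\infty$ (maximum principle) and $\|f_r\|_{W_2^{1/2}} \leq \|f\|_{W_2^{1/2}}$ (immediate from the Fourier coefficients), this provides a summable majorant uniform in $r$, reducing matters to termwise convergence $\Tr((T_{f_r}^*T_{f_r})^n-(T_{f_r}T_{f_r}^*)^n) \to \Tr((T_f^*T_f)^n-(T_fT_f^*)^n)$ for each $n$. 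Writing $A^n-B^n = \sum_{k=0}^{n-1} A^k(A-B)B^{n-1-k}$ for $A = T_f^*T_f$, $B = T_fT_f^*$ (and analogously for $f_r$), I would combine (i) $T_{f_r}\to T_f$ in the strong operator topology (from $f_r\to f$ in $L^2$ together with a uniform norm bound) and (ii) $[T_{f_r}^*, T_{f_r}]\to [T_f^*, T_f]$ in $S_1$ (from $f_r\to f$ in $W_2^{1/2}$ together with the commutator bound $\|[T_g,T_h]\|_1\leq \|g\|_{W_2^{1/2}}\|h\|_{W_2^{1/2}}$ established in the preliminaries), and conclude via the standard continuity of the trace on products of strongly convergent bounded factors with a trace-norm convergent factor. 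The most delicate point is this last termwise convergence; however, it is a routine combination of continuity properties of $\Tr$ and not genuinely an obstacle.
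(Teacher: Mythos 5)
Your proof is correct, but it takes a genuinely different route from the paper's. The paper proves the lemma with no limiting argument at all: starting from the Taylor-series decomposition in Lemma~\ref{holomorphic}, it rewrites each term $\Tr([T_{|f|^{2n-2}\overline{f}},T_f])$ as $\Tr(T_{f^{n-1}}[T_{\overline{f}^n},T_f])$ --- using $T_g T_h = T_{gh}$ for analytic $h$ and cyclicity of the trace --- and then applies Lemma~\ref{cyclic} with $h=f^{n-1}$ to get $\frac{1}{2\pi i}\int_\D F^{n-1}\,d\overline{F^n}\wedge dF = \frac{n}{\pi}\int_\D |F|^{2n-2}|F'|^2\,dA$ directly, after which summing the series gives the statement. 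Since Lemma~\ref{cyclic} is stated for symbols in $W_2^{1/2}(\T)\cap L^\infty(\T)$ and $h\in L^\infty(\T)$, the hypothesis $f\in W_2^{1/2}(\T)\cap H^\infty$ is already enough, and no $W_1^1$ regularity or smoothing is needed. Your route instead dilates to $f_r$, invokes the Corollary (Eq.~\eqref{KFD}) for the smooth analytic $f_r$, simplifies the disc integral using holomorphy of $\tilde{f_r}$, and passes to the limit $r\to 1-0$ on both sides. Both the integral-side dominated-convergence step and the trace-side argument (uniform summable majorant, then termwise convergence via SOT-convergent bounded factors wrapped around an $S_1$-convergent commutator) are sound; the one thing worth making explicit is that you also need $T_{f_r}^*\to T_f^*$ in SOT so that both $A_r^k$ and $B_r^{n-1-k}$, together with their adjoints, converge strongly, which is what lets you cyclically move a fixed trace-class factor to conclude $\Tr(A_r^k(A_r-B_r)B_r^{n-1-k})\to\Tr(A^k(A-B)B^{n-1-k})$. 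What your approach costs is a two-sided limiting analysis with simultaneous control in $L^2$, $L^\infty$, and $W_2^{1/2}$; what it buys is that it rests only on the Corollary rather than on the more refined Lemma~\ref{cyclic}. Given that the paper has already proved Lemma~\ref{cyclic} for precisely this low-regularity class, the paper's direct computation is noticeably shorter, but your argument is a valid alternative.
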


\begin{proof} We use the notation in the proof of Lemma \ref{holomorphic}. 
Lemma \ref{cyclic} implies 
\begin{align*}
\lefteqn{\Tr([T_{|f|^{2n-2}\overline{f}},T_f])=\Tr([T_{\overline{f}^n f^{n-1}},T_f])=
\Tr([T_{\overline{f}^n},T_f]T_{f^{n-1}})} \\
 &=\frac{1}{2\pi i}\int_\D F^{n-1}d\overline{F^n}\wedge dF
 =\frac{1}{\pi}\int_\D n|F(z)^2|^{n-1}|F'(z)|^2dA(z), 
\end{align*}
and 
\begin{align*}
\Tr(\varphi(T_f^*T_f)-\varphi(T_fT_f^*)) &=\frac{1}{\pi}\sum_{n=1}^\infty a_n\int_\D n|F(z)^2|^{n-1}|F'(z)|^2dA(z) \\
 &=\frac{1}{\pi}\int_\D \varphi'(|F(z)|^2) |F'(z)|^2dA(z).
\end{align*}
\end{proof}

The above lemma together with a similar argument as in the proof of Theorem \ref{HHtoKF} implies 
the following theorem.  

\begin{theorem}\label{AHHtoKF}
Let $f\in W_{2}^{1/2}(\T)\cap H^\infty$, and let $F(z)=\tf(z)$ be the holomorphic extension of $f$ to $\D$. 
Then for every $\varphi\in \cW_1[0,\|f\|_\infty^2]$, we have $\varphi(T_f^*T_f)-\varphi(T_fT_f^*)\in S_1(H^2)$ and
\begin{equation}\label{AKF}
\Tr(\varphi(T_f^*T_f)-\varphi(T_fT_f^*))=\frac{1}{\pi}\int_\D \varphi'(|F(z)|^2)|F'(z)|^2dA(z). 
\end{equation}
\end{theorem}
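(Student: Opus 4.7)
The plan is to mirror the proof of Theorem~\ref{HHtoKF}, substituting the preceding lemma for Lemma~\ref{holomorphic} at the step where the trace of $\varphi_y(T_f^*T_f) - \varphi_y(T_fT_f^*)$ is computed for the entire family $\varphi_y(x) = (e^{ixy}-1)/(iy)$. After extending $\varphi$ to a member of $\cW_1(\R)$, there is a finite complex Borel measure $\mu$ on $\R$ with
$$\varphi(x) = \varphi(0) + \int_\R \varphi_y(x)\,d\mu(y), \qquad \varphi'(x) = \int_\R e^{ixy}\,d\mu(y).$$
Note that each $\varphi_y$ is entire in $x$, so the preceding lemma applies and gives
$$\Tr(\varphi_y(T_f^*T_f) - \varphi_y(T_fT_f^*)) = \frac{1}{\pi}\int_\D e^{iy|F(z)|^2}|F'(z)|^2\,dA(z).$$

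First I would establish the trace class membership by showing that the operator integral
$$\int_\R \frac{e^{iy T_f^*T_f} - e^{iy T_fT_f^*}}{iy}\,d\mu(y)$$
converges in $S_1(H^2)$. The standard Duhamel identity yields the uniform bound $\|(e^{iyA}-e^{iyB})/(iy)\|_1 \leq \|A-B\|_1$ for bounded self-adjoint $A,B$ (as recorded in \cite[Lemma 9.26]{S2012}), which in our situation gives
$$\|\varphi_y(T_f^*T_f) - \varphi_y(T_fT_f^*)\|_1 \leq \|[T_f^*,T_f]\|_1 = \|H_{\overline{f}}\|_2^2 \leq \|f\|_{W_2^{1/2}}^2,$$
uniformly in $y$. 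Since $\mu$ is finite, the integral converges in the trace norm, and by functional calculus it coincides with $\varphi(T_f^*T_f) - \varphi(T_fT_f^*)$.

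Taking traces under the integral and applying the preceding lemma,
$$\Tr(\varphi(T_f^*T_f) - \varphi(T_fT_f^*)) = \int_\R \frac{1}{\pi}\int_\D e^{iy|F(z)|^2}|F'(z)|^2\,dA(z)\,d\mu(y).$$
Absolute integrability for Fubini is immediate, since $|e^{iy|F(z)|^2}| = 1$ and
$$\int_\D |F'(z)|^2\,dA(z) = \pi \sum_{n\geq 1} n|\hf(n)|^2 \leq \pi \|f\|_{W_2^{1/2}}^2 < \infty.$$
Interchanging the order of integration and recognizing the inner integral as $\varphi'(|F(z)|^2)$ yields the stated formula. No step is a real obstacle given the preceding lemma; the only non-trivial input beyond it is the Duhamel-type trace norm estimate, which is classical.
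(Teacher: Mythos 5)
Your proposal is correct and matches the paper's intended argument: the paper's proof of Theorem~\ref{AHHtoKF} is simply the remark that ``the above lemma together with a similar argument as in the proof of Theorem~\ref{HHtoKF}'' gives the result, and your write-up is exactly that argument carried out in detail, with the preceding analytic lemma replacing Lemma~\ref{holomorphic}, the trace-norm estimate $\|(e^{iyA}-e^{iyB})/(iy)\|_1\leq\|A-B\|_1$ supplying $S_1$-convergence of the integral, and Fubini justified by $\int_\D|F'|^2\,dA<\infty$ together with finiteness of $\mu$.
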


\begin{remark} Eq.(\ref{AKF}) gives a geometric interpretation of the spectral shift function $\xi_f$ for analytic $f$. 
Note that $|F'(z)|^2$ is the Jacobian of the map $F:\D\to \C$. 
The Sard theorem shows that the critical values  
$$C(F)=F(\{z\in \D;\;F'(z)=0 \}),$$
has Lebesgue measure 0. 
We define the multiplicity function $m(w)=\# F^{-1}(w)$ for $w\in F(\D)\setminus C(F)$.  
Then Eq.(\ref{AKF}) shows 
$$\int_{0}^{\|f\|_\infty^2}\varphi'(x)\xi_f(x)dx=\frac{1}{\pi}\int_{F(\D)}\varphi'(|w|^2)m(w)dA(w).$$
As $\varphi'$ can be any function in $C^1[0,\|f\|_\infty^2]$, this means that $\xi_f$ is the density function, with respect to 
the Lebesgue measure on $[0,\|f\|_\infty^2]$, of the variable $|w|^2$ whose distribution is given by the measure  
$$\frac{m(w)dA(w)}{\pi}$$
on $F(\D)$. 
\end{remark}

\begin{remark} It is shown in \cite[Theorem 1.2]{TWZ2023} that the Helton-Howe formula holds for Toeplitz operators 
$T^{(t)}_f$ on the weighted Bergman spaces $L^2_{a,t}(\D)$ with symbols in $C^2(\overline{\D})$.  
The condition $T^{(t)}_{fg}-T^{(t)}_fT^{(t)}_g\in S_1(L^2_{a,t}(\D))$ holds too (\cite[Theorem 6.3,(1)]{TWZ2023}, 
\cite[Theorem 8.36]{Z2007}).  
As the Helton-Howe formula itself remains the same, 
$$\Tr([T^{(t)}_f,T^{(t)}_g])=\frac{1}{2\pi i}\int_\D df\wedge dg=\frac{1}{2\pi i}\int_{\partial \D} fdg,$$
the formulae Eq.(\ref{KFfT}), (\ref{KFD}), (\ref{SSF}), (\ref{AKF}) still hold in the case of the weighted Bergman spaces too.  
\end{remark}
\section{The Witten index formula}
Assume that $f\in C^1(\T)$ has no zeros. 
Since $T_{\frac{1}{f}}$ is the inverse of $T_f$ modulo $S_1(H^2)$, the Fredholm index of $T_f$ is given by 
$\Tr([T_f,T_{\frac{1}{f}}])$, 
and the Helton-Howe formula shows 
$$\ind T_f=\Tr([T_f,T_{\frac{1}{f}}])=\frac{-1}{2\pi i}\int_\T \frac{f'(t)}{f(t)}dt.$$ 
The purpose of this section is to generalize this formula to the Witten index 
as an application of Lemma \ref{holomorphic}. 

\begin{lemma} 
Let $a>0$, and let $f$ be an absolutely continuous function on $[-a,a]$ having the only zero at $x=0$. 
We assume:
\begin{itemize} 
\item[$(1)$] $f'(x)$ exists for every $x\in [-a,a]\setminus \{0\}$.  
\item[$(2)$] There exists $g\in C^1[-a,a]$ satisfying 
$$\frac{f'(x)}{f(x)}=\frac{g(x)}{x},\quad \forall x\in [-a,a]\setminus \{0\}.$$
\item[$(3)$] There exist $\beta>0$ and $h\in C^1[-a,a]$ such that $|f(x)|^2=|x|^\beta h(x)$ and 
$h(x)>0$ for all $x\in [-a,a]$.  
\end{itemize} 
Then for every $\varepsilon\in (0,a]$, the limit 
$$\lim_{s\to +\infty}\int_{-\varepsilon}^\varepsilon(1-e^{-s|f(x)|^2})\frac{f'(x)}{f(x)}dx$$
exists and it is the order of $O(\varepsilon)$ as $\varepsilon$ tends to $+0$. 
\end{lemma}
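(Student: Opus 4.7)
The plan is to use hypotheses (2) and (3) to rewrite the integrand as
$$\bigl(1 - e^{-s|x|^\beta h(x)}\bigr)\frac{g(x)}{x},$$
and then to split $g(x) = g(0) + (g(x) - g(0))$, producing two pieces
$$J_s^{(1)} := g(0)\int_{-\varepsilon}^\varepsilon \frac{1 - e^{-s|x|^\beta h(x)}}{x}\,dx, \qquad J_s^{(2)} := \int_{-\varepsilon}^\varepsilon \bigl(1 - e^{-s|x|^\beta h(x)}\bigr)\frac{g(x) - g(0)}{x}\,dx,$$
to be treated separately: $J_s^{(2)}$ by dominated convergence, and $J_s^{(1)}$ by antisymmetrisation combined with an exponential estimate.

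For $J_s^{(2)}$, the mean value theorem applied to $g \in C^1[-a,a]$ bounds $|(g(x) - g(0))/x|$ by $\|g'\|_\infty$ on $[-a,a]$, while $|1 - e^{-s|f(x)|^2}| \leq 1$. Since $f$ vanishes only at $0$, the factor $1 - e^{-s|f(x)|^2}$ converges pointwise to $1$ for $x \neq 0$ as $s \to +\infty$. Dominated convergence then yields
$$\lim_{s\to+\infty} J_s^{(2)} = \int_{-\varepsilon}^\varepsilon \frac{g(x) - g(0)}{x}\,dx,$$
whose modulus is bounded by $2\varepsilon\|g'\|_\infty$, hence $O(\varepsilon)$.

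For $J_s^{(1)}$, the substitution $x = -y$ on $[-\varepsilon, 0]$ kills the contribution of the even part of $1 - e^{-s|x|^\beta h(x)}$ against the odd weight $1/x$, leaving
$$J_s^{(1)} = g(0)\int_0^\varepsilon \frac{e^{-sy^\beta h(-y)} - e^{-sy^\beta h(y)}}{y}\,dy.$$
With $m := \min_{[-a,a]} h > 0$ and $C := \|h'\|_\infty$, the mean value theorem applied to $u \mapsto e^{-u}$, combined with $|h(y) - h(-y)| \leq 2Cy$, yields the pointwise bound
$$\left|\frac{e^{-sy^\beta h(-y)} - e^{-sy^\beta h(y)}}{y}\right| \leq 2C\, s y^\beta e^{-smy^\beta},$$
and the substitution $v = sy^\beta$ reduces $\int_0^\varepsilon s y^\beta e^{-smy^\beta}\,dy$ to $s^{-1/\beta}$ times an incomplete gamma integral which is uniformly bounded in $s$. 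Hence $J_s^{(1)} \to 0$ as $s \to +\infty$, and adding this to the limit of $J_s^{(2)}$ gives the stated conclusion.

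The main obstacle is the piece $J_s^{(1)}$: neither exponential is individually integrable against $1/y$ near $0$, so the argument has to exploit cancellation between the two. Quantifying this cancellation uses both the positivity of $h$ (which produces the uniform exponential factor $e^{-smy^\beta}$) and the $C^1$-smoothness of $h$ (which linearises $h(y) - h(-y)$ in $y$); only when these two ingredients are combined does the $1/y$ weight get absorbed and the limit vanish.
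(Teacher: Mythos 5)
Your proposal is correct and follows essentially the same strategy as the paper: decompose via $g(x)=g(0)+(g(x)-g(0))$, handle the $(g(x)-g(0))/x$ piece by dominated convergence and the $C^1$ bound on $g$, antisymmetrize the $g(0)$ piece over $[0,\varepsilon]$, and extract $s^{-1/\beta}$ decay from the cancellation of the two exponentials using the positivity and $C^1$-smoothness of $h$ plus the substitution rescaling $y$ by $s^{1/\beta}$. The only cosmetic difference is that you invoke the mean value theorem for $u\mapsto e^{-u}$ together with $|h(y)-h(-y)|\le 2\|h'\|_\infty\,y$, whereas the paper writes the same difference as $-\int_{-1}^1\frac{\partial}{\partial r}e^{-sx^\beta h(rx)}\,dr$; both yield the identical bound $2\|h'\|_\infty\, s y^\beta e^{-smy^\beta}$ and the same $O(s^{-1/\beta})$ estimate.
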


\begin{proof} We have \begin{align*}
\lefteqn{\int_{-\varepsilon}^\varepsilon(1-e^{-s|f(x)|^2})\frac{f'(x)}{f(x)}dx} \\
 &=g(0)\int_{-\varepsilon}^\varepsilon(1-e^{-s|x|^\beta h(x)})\frac{1}{x}dx
 +\int_{-\varepsilon}^\varepsilon(1-e^{-s|x|^\beta h(x)})\frac{g(x)-g(0)}{x}dx.
\end{align*} 
For the first term, we have 
\begin{align*}
\lefteqn{\int_{-\varepsilon}^\varepsilon(1-e^{-s|x|^\beta h(x)})\frac{1}{x}dx
=\int_0^\varepsilon (e^{-sx^\beta g(-x)}-e^{-sx^\beta g(x)})\frac{1}{x}dx } \\
 &=-\int_0^\varepsilon\int_{-1}^1\frac{\partial}{\partial r}e^{-x^\beta g(rx)}dr\frac{1}{x}dx
 =s\int_0^\varepsilon \int_{-1}^1x^\beta h'(rx)e^{-sx^\beta h(rx)}drdx \\
 &=s^{-1/\beta}\int_0^{\varepsilon s^{1/\beta}}\int_{-1}^1y^\beta h'(rs^{-1/\beta}y)e^{-y^\beta h(rs^{-1/\beta}y)}drdy. 
\end{align*}
Let $M=\|h'\|_\infty$ and $m=\min\{h(x);\;x\in [a,-a]\}$. 
Then 
$$|\int_{-\varepsilon}^\varepsilon(1-e^{-s|x|^\beta h(x)})\frac{1}{x}dx|\leq 
s^{-1/\beta}2M\int_0^{\infty}y^\beta e^{-m y^\beta}dy.$$
Thus 
$$\lim_{s\to+\infty}\int_{-\varepsilon}^\varepsilon(1-e^{-s|f(x)|^2})\frac{f'(x)}{f(x)}dx 
=\int_{-\varepsilon}^\varepsilon\frac{g(x)-g(0)}{x}dx=O(\varepsilon)$$
as $\varepsilon\to +0$. 
\end{proof}

Now we show a generalized winding number formula for the Witten index of a Toeplitz operator. 

\begin{theorem}\label{Witten} 
Let $f\in W_2^{1/2}(\T)\cap W_1^1(\T)$, and assume that $f$ has only finitely many zeros 
$t_1,t_2,\cdots, t_m$. 
We further assume that there exists $\delta>0$ satisfying the following for each $1\leq j\leq m$: 
\begin{itemize} 
\item[$(1)$] $f'(t)$ exists for every $t\in [t_j-\delta,t_j+\delta]\setminus \{t_j\}$.  
\item[$(2)$] There exists $g_j\in C^1[t_j-\delta,t_j+\delta]$ satisfying 
$$\frac{f'(t)}{f(t)}=\frac{g_j(t)}{t-t_j},\quad \forall t\in [t_j-\delta,t_j+\delta]\setminus \{t_j\}.$$
\item[$(3)$] There exist $\beta_j>0$ and $h_j\in C^1[t_j-\delta,t_j+\delta]$ such that $|f(t)|^2=|t-t_j|^{\beta_j} h_j(t)$, and 
$h_j(t)>0$ for all $t\in [t_j-\delta,t_j+\delta]$.  
\end{itemize} 
Then the Witten index for $T_f$ exists and it is given by the following principal value integral:
\begin{equation}
\ind_W T_f=\frac{-1}{2\pi i}\:\mathrm{p.v.}\int_{\T}\frac{f'(t)}{f(t)}dt.
\end{equation}
\end{theorem}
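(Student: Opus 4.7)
The plan is to apply Theorem \ref{HHtoKF} with the function $\varphi_s(x) = e^{-sx}$ and then pass to the limit $s \to +\infty$ with care at the zeros of $f$. First I would verify that $\varphi_s \in \cW_1[0,\|f\|_\infty^2]$: multiplying $e^{-sx}$ by a smooth compactly supported cutoff equal to $1$ on $[0,\|f\|_\infty^2]$ yields a Schwartz function on $\R$ whose derivative is Schwartz, hence the Fourier transform of a finite (in fact Schwartz) Borel measure. Theorem \ref{HHtoKF} then gives
\begin{equation*}
\Tr(e^{-sT_f^*T_f} - e^{-sT_fT_f^*}) = \frac{-1}{2\pi i}\int_\T \bigl(1 - e^{-s|f(t)|^2}\bigr)\frac{f'(t)}{f(t)}\,dt,
\end{equation*}
so the problem reduces to computing the $s \to +\infty$ limit of the right-hand side.

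Next I would split the integral using $A_\varepsilon = \bigcup_{j=1}^m [t_j - \varepsilon, t_j + \varepsilon]$ and $B_\varepsilon = \T \setminus A_\varepsilon$, for $\varepsilon \in (0,\delta)$ small enough that the intervals are pairwise disjoint. On $B_\varepsilon$ the function $|f|$ is bounded below by some $c_\varepsilon > 0$, so $1 - e^{-s|f(t)|^2} \to 1$ uniformly while $f'/f \in L^1(B_\varepsilon)$; hence
\begin{equation*}
\lim_{s \to +\infty} \int_{B_\varepsilon} \bigl(1 - e^{-s|f(t)|^2}\bigr)\frac{f'(t)}{f(t)}\,dt = \int_{B_\varepsilon} \frac{f'(t)}{f(t)}\,dt.
\end{equation*}
Near each zero, the preceding lemma applied after the translation $x = t - t_j$ (its hypotheses are precisely (1)--(3) of the theorem) asserts that the limit $\lim_{s\to+\infty}\int_{t_j-\varepsilon}^{t_j+\varepsilon} (1 - e^{-s|f(t)|^2})\frac{f'(t)}{f(t)}\,dt$ exists and is $O(\varepsilon)$ as $\varepsilon \to +0$. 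Summing these contributions shows that $\ind_W T_f$ exists and that, for every sufficiently small $\varepsilon > 0$,
\begin{equation*}
\ind_W T_f = \frac{-1}{2\pi i}\left(\int_{B_\varepsilon} \frac{f'(t)}{f(t)}\,dt + O(\varepsilon)\right).
\end{equation*}
Letting $\varepsilon \to 0+$ yields the claimed principal value formula, once one checks that $\int_{B_\varepsilon} f'/f\,dt$ indeed converges as $\varepsilon \to 0+$; but this is immediate from condition (2), which gives $f'(t)/f(t) = g_j(t)/(t - t_j)$ with $g_j \in C^1$ near $t_j$, so writing $g_j(t) = g_j(t_j) + (g_j(t) - g_j(t_j))$ produces the usual symmetric cancellation in $\log\varepsilon$.

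The main technical obstacle is precisely the $s \to +\infty$ limit in a shrinking neighborhood of each zero, where the integrand $(1 - e^{-s|f|^2}) f'/f$ is neither monotone in $s$ nor uniformly integrable, and naive convergence theorems fail; this is the content of the preceding lemma, which we invoke as a black box. The remaining ingredients---membership of the cutoff $\varphi_s$ in $\cW_1$, uniform convergence away from the zeros, and existence of the principal value---are routine.
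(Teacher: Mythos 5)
Your proof is correct and follows essentially the same route as the paper: reduce to the scalar integral via the trace formula, split off shrinking neighborhoods of the zeros, handle the complement by bounded/uniform convergence, and invoke the preceding lemma for the local contributions. The only cosmetic difference is that you route through Theorem \ref{HHtoKF} with a smooth cutoff of $e^{-sx}$, whereas the paper applies Lemma \ref{holomorphic} directly (since $e^{-sx}$ is entire, no cutoff or membership check in $\cW_1$ is needed), which is slightly more economical.
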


\begin{proof} Lemma \ref{holomorphic} implies 
$$\Tr(e^{-sT_f^*T_f}-e^{-sT_fT_f^*})=\frac{-1}{2\pi i}\int_\T (1-e^{-s|f(t)|^2})\frac{f'(t)}{f(t)}dt.$$
For $0<\varepsilon\leq \delta$, we set 
$$I_\varepsilon=\bigcup_{j=1}^m (t_j-\varepsilon,t_j+\varepsilon).$$ 
Then since $f'(t)/f(t)$ is integrable on $\T\setminus I_\varepsilon$, the Lebesgue theorem implies 
$$\lim_{s\to+\infty}\int_{\T\setminus I_\varepsilon}(1-e^{-s|f(t)|^2})\frac{f'(t)}{f(t)}dt=
\int_{\T\setminus I_\varepsilon}\frac{f'(t)}{f(t)}dt.$$
The above lemma implies that the following double limit exists:  
$$\lim_{\varepsilon \to +0}\lim_{s\to+\infty}\int_{I_\varepsilon}(1-e^{-s|f(t)|^2})\frac{f'(t)}{f(t)}dt=0.$$
Thus we get 
$$\lim_{s\to+\infty}\int_{\T}(1-e^{-s|f(t)|^2})\frac{f'(t)}{f(t)}dt
=\lim_{\varepsilon\to+0}\int_{\T\setminus I_\varepsilon}\frac{f'(t)}{f(t)}dt=\mathrm{p.v.}\int_{\T}\frac{f'(t)}{f(t)}dt.$$
\end{proof}

\begin{example} Let $Q(z)$ be a rational function without poles on the unit circle, and let $f(t)=Q(e^{it})$. 
We can express $Q(z)$ as 
$$Q(z)=c\frac{\prod_{k=1}^N(z-a_k)^{n_k}}{\prod_{j=1}^M (z-b_j)^{m_j}}.$$
Then we have 
\begin{align*}
\ind_WT_f&=\frac{1}{2\pi i}\:\mathrm{p.v.}\int_{|z|=1}\left(\sum_{j=1}^M\frac{m_j}{z-b_j}-\sum_{k=1}^N\frac{n_k}{z-a_k}\right)dz \\
 &=\sum_{|b_j|<1}m_j-\sum_{|a_k|<1}n_k-\frac{1}{2}\sum_{|a_k|=1}n_k.
\end{align*}
This is the reason why a half-integer appears as $\ind_W T_f$ in \cite{MSTTW2023}. 
\end{example}

\begin{example}\label{anyv}
 Let $f(t)=e^{int}(1+e^{it})^\alpha$ with $\alpha>0$ and $n\in \Z$. 
Then
$$\ind_WT_f=\frac{-1}{2\pi i}\:\mathrm{p.v.}\int_{|z|=1}\left(\frac{n}{z}+\frac{\alpha}{z+1}\right)dz=-n-\frac{\alpha}{2}.$$
This shows that the Witten index can take any real numbers. 
\end{example}

\begin{problem}
Is it possible to realize a value other than the half-integers as $\ind_W T_f$ with $f\in C^\infty(\T)$ ?
\end{problem}

\section{Extension of the trace formulae to operator monotone functions}\label{om}
Let $f\in W_2^{1/2}(\T)\cap W_1^1(\T)$. 
As the function $x^{p/2}$ belongs to $\cW_1[0,\|f\|_\infty^2]$ for $p\geq 2$, Theorem \ref{HHtoKF} implies  
\begin{equation}\label{ptr}
|T_f|^{p}-|T_f^*|^p\in S_1(H^2),
\end{equation}
\begin{equation}\label{pKF}
\Tr(|T_f|^p-|T_f^*|^p)=\frac{1}{2\pi i}\int_\T |f(t)|^p\frac{f'(t)}{f(t)}dt,
\end{equation} 
for $p\geq 2$. 
Although the Krein theory does not apply to the case of $0<p<2$, we still have a chance to get the same result if the symbol $f$ 
has better regularity. 
In fact, if the Hankel operators $H_f$ and $H_{\overline{f}}$ belong to $S_p(H^2,{H^2}^\perp)$, 
we will see that Eq.(\ref{ptr}) holds, and so does Eq.(\ref{pKF}) as far as the right-hand side makes sense. 
Moreover, when the symbol $f$ is analytic, we will show a much stronger result due to the hyponormality of $T_f$, i.e.  
$T_f^*T_f-T_fT_f^*\geq 0$.  

We first recall Peller's famous criterion for the Hankel operators to belong to $S_p(H^2,{H^2}^\perp)$ 
(see \cite[Chapter 6]{P2003}, \cite[Theorem 10.21]{Z2007} for the proof). 
Note that $H_f=H_{P_-f}$, and $\overline{P_-f}$ is analytic. 

\begin{theorem}\label{Peller} 
Let $0<p$, and let $f$ be an analytic symbol. 
Then $H_{\overline{f}}\in S_p(H^2,{H^2}^\perp)$ if and only if $f$ belongs to the Besov space $B_p^{1/p}(\T)$. 
\end{theorem}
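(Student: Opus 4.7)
This is a classical theorem of Peller whose full proof requires substantial harmonic-analytic machinery; I will sketch the standard route via a Littlewood-Paley decomposition combined with the Adamyan-Arov-Krein (AAK) theorem. As a sanity check, the case $p=2$ is purely computational: from $\|H_{\bar f}\|_2^2 = \sum_{n \geq 1} n|\hat f(n)|^2$ (already used in Section~2) one reads off immediately that $H_{\bar f}\in S_2$ if and only if $f\in W_2^{1/2}(\T)=B_2^{1/2}(\T)$. The general theorem can be viewed as an interpolation between this identity and Nehari's estimate $\|H_{\bar f}\|\le \|f\|_\infty$ (the formal case $p=\infty$, smoothness $s=0$).

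Fix a smooth Littlewood-Paley partition yielding $f = \sum_{k\geq 0} f_k$ with $\hat f_k$ supported in the dyadic frequency block $\{2^{k-1}\le n<2^{k+1}\}$, so that $\|f\|_{B_p^{1/p}}^p \asymp \sum_{k\geq 0} 2^k\|f_k\|_p^p$. For the direction $f\in B_p^{1/p}\Rightarrow H_{\bar f}\in S_p$, each $H_{\bar f_k}$ has rank $\lesssim 2^k$ (it is a Hankel with trigonometric-polynomial symbol of degree $\approx 2^k$) and operator norm $\le \|f_k\|_\infty$ by Nehari; combining the elementary bound $\|T\|_p\le (\mathrm{rank}\,T)^{1/p}\|T\|$ with the Bernstein inequality $\|f_k\|_\infty \lesssim 2^{k/p}\|f_k\|_p$ yields a per-block Schatten estimate, and the ($p$-)triangle inequality in $S_p$ assembles these into $\|H_{\bar f}\|_p\lesssim \|f\|_{B_p^{1/p}}$.

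For the converse, the Adamyan-Arov-Krein theorem identifies $s_n(H_{\bar f})$ with the $L^\infty$-distance of $\bar f$ from meromorphic functions on $\D$ with at most $n$ poles; this converts $H_{\bar f}\in S_p$ into a decay rate for best rational approximation of $f$, which is in turn equivalent to $f\in B_p^{1/p}$ via a classical characterization due to Peller and Pekarskii. The main obstacle is the range $0<p<1$, where $\|\cdot\|_p$ is only a quasi-norm, the triangle-type inequality in $S_p$ weakens, and the passage between Schatten quasi-norms and Besov norms becomes significantly more delicate; Peller's argument there relies on an atomic decomposition of $B_p^{1/p}$ together with Rotfel'd-type inequalities to control sums of finite-rank Hankel operators in the $S_p$ quasi-norm. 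Since the theorem is invoked in the paper purely as a black box, I would in practice defer to \cite[Chapter 6]{P2003} rather than reproduce the $0<p<1$ analysis.
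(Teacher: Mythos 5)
The paper itself does not prove this statement: it is Peller's theorem, quoted as a known result with pointers to \cite[Chapter 6]{P2003} and \cite[Theorem 10.21]{Z2007}, so there is no in-text proof to compare your sketch against. Your $p=2$ sanity check is correct and matches the Hilbert--Schmidt computation in Section~2, and the architecture you describe (dyadic Littlewood--Paley decomposition for sufficiency, Adamyan--Arov--Krein plus rational approximation for necessity, with extra care for $0<p<1$) is indeed the standard route.

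There is, however, a concrete gap in the sufficiency half of your sketch: the per-block Schatten estimate is too lossy. Combining $\mathrm{rank}\,H_{\bar f_k}\lesssim 2^k$, Nehari ($\|H_{\bar f_k}\|\le\|f_k\|_\infty$), and Bernstein--Nikolskii ($\|f_k\|_\infty\lesssim 2^{k/p}\|f_k\|_p$) yields $\|H_{\bar f_k}\|_p^p\lesssim 2^k\cdot 2^k\|f_k\|_p^p=2^{2k}\|f_k\|_p^p$, which, even granting the assembly step, gives $\|H_{\bar f}\|_p\lesssim\|f\|_{B_p^{2/p}}$ --- the wrong Besov exponent. The loss is already visible at $p=2$: for the Dirichlet-type block $f_k=\sum_{2^{k-1}\le j<2^{k+1}}e^{ijt}$, your bound gives $\|H_{\bar f_k}\|_2\lesssim 2^{3k/2}$, while the exact formula $\|H_{\bar f_k}\|_2^2=\sum_j j|\hat f_k(j)|^2\approx 2^{2k}$ gives only $2^k$. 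The correct sharp estimate $\|H_{\bar f_k}\|_p\lesssim 2^{k/p}\|f_k\|_p$ does not follow from the crude rank-times-operator-norm trade-off; in Peller's argument it is obtained by comparing the band-supported Hankel matrix (after reversing columns) with a circulant of size $\sim 2^k$, whose $S_p$ quasi-norm equals $\bigl(\sum_j|f_k(\omega^j)|^p\bigr)^{1/p}$ over the $2^k$-th roots of unity, and then applying a Marcinkiewicz--Zygmund quadrature bound $\sum_j|f_k(\omega^j)|^p\lesssim 2^k\|f_k\|_p^p$. A secondary issue is that for $p>1$ the $p$-triangle inequality you invoke fails, and the Besov condition gives only $\ell^p$ decay of the block norms, not $\ell^1$; the standard fix uses the near-orthogonality of the ranges and cokernels of the $H_{\bar f_k}$ (or interpolation from the $p=1$ and $p=2$ endpoints). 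Since the paper treats this theorem as a black box, none of this affects the paper, but your sketch as written does not reach the stated exponent $1/p$.
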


The reader is referred to \cite[Appendix 2]{P2003} and \cite{P2024} for the basic properties of the Besov spaces. 
The space $B_{p}^{1/p}(\T)$ is closed under the Riesz projection $P_+$ for every $p>0$. 
It is known that we have $B_{2}^{1/2}(\T)=W_2^{1/2}(\T)$ and $B_1^1(\T)\subset W_1^1(\T)$ (see, for example, 
\cite[Section 1,(P9)]{GGJ2011}). 
As we have $B_1^1(\T) \subset B_2^{1/2}(\T)$ (corresponding to the inclusion $S_1(H^2,{H^2}^\perp)\subset S_2(H^2,{H^2}^\perp)$ 
through Peller's theorem), every symbol in $B_1^1(\T)$ belongs to our working space $W_2^{1/2}(\T)\cap W_1^1(\T)$. 

We denote by $B_p$ the set of holomorphic functions on $\D$ whose boundary value 
functions belong to $B_p^{1/p}(\T)$, and call it the analytic Besov space.  
By slightly abusing notation, we write $f\in B_p$ if $\tf\in B_p$.  
Since Peller's theorem only requires $B_p$, we recall the following useful criterion for $B_p$ instead of giving 
the definition of $B_p^{1/p}(\T)$ 
(in fact, Zhu's book \cite[Chapter 5.3]{Z2007} adopts it as the definition of $B_p$). 

\begin{lemma}\label{B_p}  
For a holomorphic function $F(z)$ on $\D$ and $p>0$, the following conditions are equivalent: 
\begin{itemize} 
 \item [$(1)$] $F\in B_p$. 
 \item [$(2)$] There exists $n\in \N$ satisfying $pn>1$ and 
 $$\int_\D\left|(1-|z|^2)^n F^{(n)}(z)\right|^p\frac{1}{(1-|z|^2)^2}dA(z)<\infty.$$ 
 \item [$(3)$] For every $n\in \N$ satisfying $pn>1$, 
 $$\int_\D\left|(1-|z|^2)^n F^{(n)}(z)\right|^p\frac{1}{(1-|z|^2)^2}dA(z)<\infty.$$ 
\end{itemize} 
\end{lemma}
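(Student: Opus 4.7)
The implication $(3) \Rightarrow (2)$ is immediate, so the real work lies in proving $(2) \Rightarrow (3)$ and in bridging the integral condition to membership in $B_p$. The plan is to rely on standard machinery from weighted Bergman space theory, principally the reproducing kernel formula and Forelli--Rudin estimates for the associated integral operators.

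The central step is to show that if the integral in $(2)$ is finite for some integer $n$ with $pn>1$, then it is finite for every integer $n'$ with $pn'>1$. For the passage from $n$ to $n+1$, I would apply the reproducing formula for weighted Bergman spaces: for sufficiently large real $s$, any holomorphic $G$ in $L^p(\D,(1-|z|^2)^{\alpha}dA)$ with suitable $\alpha$ satisfies
\[
G(z) = c_s \int_\D \frac{(1-|w|^2)^s G(w)}{(1-\bar{w}z)^{s+2}}\, dA(w).
\]
Applied with $G=F^{(n)}$ and then differentiated under the integral in $z$, this yields an integral representation of $F^{(n+1)}$ whose boundedness on $L^p(\D,(1-|z|^2)^{p(n+1)-2}dA)$ reduces to the Forelli--Rudin lemma on integral operators with kernel $(1-|w|^2)^s|1-\bar{w}z|^{-s-3}$. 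Reversing the process (from $n$ to $n-1$ whenever $p(n-1)>1$) follows by choosing a reproducing formula adapted to index $n-1$ and applying Forelli--Rudin again. Together these establish $(2) \Leftrightarrow (3)$.

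For the equivalence $(1) \Leftrightarrow (2)$, I would invoke the Littlewood--Paley characterization of the analytic Besov space $B_p^{1/p}(\T)$. For $p \geq 1$ the equivalence with an area integral of a first derivative of $F$ is classical via Hardy--Littlewood type estimates, while for $0 < p < 1$ one must work with a higher-order derivative $F^{(n)}$ with $pn>1$ to ensure that the relevant reproducing integral operators remain bounded. Combined with the independence of $n$ established above, this yields the full equivalence of $(1)$, $(2)$, and $(3)$, and matches the definition of $B_p$ used in Zhu's book.

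The main obstacle will be the regime $0 < p \leq 1$, where the Bergman projection fails to be $L^p$-bounded and the Forelli--Rudin estimates must be used in their sharp form with $s$ chosen sufficiently large depending on $p$; the hypothesis $pn > 1$ is precisely what is needed for the associated weighted $L^p$ spaces and integral operators to behave, and tracking this threshold carefully through each step is where the delicate analysis lies.
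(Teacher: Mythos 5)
The paper does not supply a proof of this lemma. It is invoked as a known characterization, with Zhu's book \cite[Chapter 5.3]{Z2007} cited (where condition (2)/(3) is actually \emph{adopted as the definition} of $B_p$, and the equivalence with the classical circle-Besov space $B_p^{1/p}(\T)$ is proved there and in Peller's book). So there is no in-paper argument against which to compare your sketch line by line. Your outline is nonetheless a reasonable summary of the standard proof one finds in those references: establish $n$-independence of the integral condition via the weighted Bergman reproducing kernel and Forelli--Rudin type estimates, then match against a Littlewood--Paley description of $B_p^{1/p}(\T)$; you have also correctly flagged that the hypothesis $pn>1$ and the regime $0<p\le 1$ are where the care is needed. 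One remark worth adding: for $0<p<1$ the references typically do not push the integral operator through $L^p$-boundedness (which is awkward without the triangle inequality) but rather use a subharmonicity estimate of the form
\[
|G(z)|^p \;\lesssim\; \frac{1}{(1-|z|^2)^2}\int_{|w-z|<\frac{1-|z|^2}{4}}|G(w)|^p\,dA(w)
\]
followed by Fubini; that route avoids duality entirely and is somewhat cleaner than invoking sharp Forelli--Rudin bounds in the small-$p$ range. But this is a detail of execution, not a flaw in your plan.
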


In particular, a holomorphic function $F$ on $\D$ belongs to $B_1$ if and only if 
$$\int_\D|F''(z)|dA(z)<\infty,$$
and it belongs to $B_2$ if and only if 
$$\int_\D|F'(z)|^2 dA(z)<\infty.$$

\begin{example}\label{ExAnBe} 
Let $\alpha>0$ and let $\psi(z)=\frac{z}{\log(1+z)}$. 
Note that $\psi$ has no zero in $\D$, and extends to a continuous function on $\overline{\D}$ 
with the only zero at $z=-1$. 
Thus $\log \psi(z)$ is well-defined as a continuous function on $\overline{\D}\setminus \{-1\}$, 
where we take the branch satisfying $\log \psi(1)>0$.  
As $\lim_{z\to -1}\log\psi(z)=-\infty$, we can choose sufficiently large $C>0$ so that 
$\frac{1}{C-\log \psi(z)}$ continuously extends to $\overline{\D}$. 
We fix such $C$. 
Direct computation using Lemma \ref{B_p} shows: 
\begin{itemize}
\item [(1)] $(1+z)^\alpha$ belongs to $B_p$ for all $p>0$. 
\item [(2)] $\psi(z)^\alpha$ belongs to $B_p$ if and only if $p>\frac{1}{1+\alpha}$. 
\item [(3)] $\frac{1}{C-\log\psi(z)}$ belongs to $B_p$ if and only if $p\geq 1$. 
\end{itemize}
\end{example}

We fix $p>0$ for now and set $q=p/2$ to avoid possible confusion. 
Since the function $x^{p/2}=x^q$ in the region $0<p<2$ is operator monotone, 
we can make use of the sophisticated theory of operator monotone functions on $[0,\infty)$ to extend Eq.(\ref{ptr}) 
and Eq.(\ref{pKF}). 
Recall that a function $\varphi:[0,\infty)\to \R$ is said to be operator monotone if $A\geq B\geq 0$ implies 
$\varphi(A)\geq \varphi(B)$ for any positive operators $A,B\in B(H)$ on a Hilbert space $H$. 
For $\lambda> 0$, we set $\varphi_{\lambda}(x)=\frac{x}{\lambda+x}$, which 
is a typical example of operator monotone functions. 
It is well-known (see for example \cite[V53]{B1997}) that for an operator monotone function $\varphi$ on $[0,\infty)$, 
there exist a unique constant $a_\varphi\geq 0$ and a positive Borel measure $\mu_\varphi$ on $[0,\infty)$ satisfying 
$$\int_0^\infty \frac{\lambda}{1+\lambda}d\mu_\varphi(\lambda)<\infty,$$
such that 
\begin{equation}\label{ieomf}
\varphi(x)=\varphi(0)+a_\varphi x+\int_0^\infty \varphi_\lambda(x)\lambda d\mu_\varphi(\lambda).
\end{equation}
For $\varphi(x)=x^{q}$, we have $a_\varphi=0$, and $\mu_\varphi(d\lambda)=\frac{\sin{q\pi}}{\pi}\lambda^{q-2}d\lambda$. 

If $A-B$ is a compact operator, we can see that $\varphi(A)-\varphi(B)$ is a compact operator too. 
Indeed, the integral expression shows
$$\varphi(A)-\varphi(B)=a_\varphi(A-B)+\int_0^\infty(\varphi_\lambda(A)-\varphi_\lambda(B))\lambda d\mu_\varphi(\lambda),$$
where the convergence is, a priori, in the strong operator topology. 
As the integral is equal to 
$$\lim_{n\to\infty}\int_{\frac{1}{n}}^n(\frac{1}{\lambda+B}-\frac{1}{\lambda +A})\lambda^2 d\mu_\varphi(\lambda)=
\lim_{n\to\infty}\int_{\frac{1}{n}}^n 
\frac{1}{\lambda+B}(A-B)\frac{1}{\lambda +A}\lambda^2 d\mu_\varphi(\lambda)
,$$
the convergence is the norm topology, and we see that $\varphi(A)-\varphi(B)$ is a compact operator.  

If moreover $A-B\in S_1(H)$, the map 
$$(0,\infty)\ni \lambda\mapsto \frac{1}{\lambda+B}-\frac{1}{\lambda +A}=\frac{1}{\lambda+B}(A-B)\frac{1}{\lambda +A}$$
is continuous in the trace norm. 
Note that if moreover $A\geq B$, the quantity 
$$\Tr(\varphi(A)-\varphi(B))\in [0,\infty]$$
makes sense regardless of whether $\varphi(A)-\varphi(B)$ belongs to $S_1(H)$ or not. 
Thus the lower semicontinuity of the trace implies 
\begin{equation}\label{trint}
\Tr(\varphi(A)-\varphi(B))=a_\varphi\Tr(A-B)+\int_0^\infty\Tr\left(\frac{1}{\lambda+B}-\frac{1}{\lambda+A}\right)\lambda^2 
d\mu_\varphi(\lambda).
\end{equation}

We recall a majorization result involving an operator monotone function due to Ando \cite{A2988}
and Kosaki \cite[Appendix]{HN1989}. 

\begin{theorem} Let $A, B\in B(H)$ be positive operators on a Hilbert space $H$, and assume that $A-B$ is 
a positive compact operator. 
Let $\varphi$ be an operator monotone function on $[0,\infty)$ satisfying $\varphi(0)=0$. 
Then for every $n\in \N$, the majorization inequality
$$\sum_{j=1}^n s_k(\varphi(A)-\varphi(B))\leq \sum_{k=1}^n s_k(\varphi(A-B)).$$
holds. 
In particular, if $\varphi(A-B)$ belongs to $S_1(H)$, so does $\varphi(A)-\varphi(B)$, and  
$$\Tr(\varphi(A)-\varphi(B))\leq \Tr(\varphi(A-B)).$$
\end{theorem}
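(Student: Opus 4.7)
The strategy is to reduce the majorization for a general operator monotone $\varphi$ to the elementary building blocks $\varphi_\lambda(x)=x/(\lambda+x)$ via the integral representation (\ref{ieomf}). Since $\varphi(0)=0$, we have
$$\varphi(x) = a_\varphi x + \int_0^\infty \varphi_\lambda(x)\,\lambda\, d\mu_\varphi(\lambda),$$
and the argument in the paragraph preceding (\ref{trint}) shows that the corresponding operator-valued identity
$$\varphi(A)-\varphi(B) = a_\varphi(A-B) + \int_0^\infty\bigl(\varphi_\lambda(A)-\varphi_\lambda(B)\bigr)\,\lambda\, d\mu_\varphi(\lambda)$$
holds with convergence in operator norm (and in trace norm in the $S_1$ setting).

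Next, by the Ky Fan dominance principle, the desired majorization is equivalent to the family of Ky Fan norm inequalities $\|\varphi(A)-\varphi(B)\|_{(n)}\leq \|\varphi(A-B)\|_{(n)}$ for all $n\in\N$, where $\|T\|_{(n)}:=\sum_{k=1}^n s_k(T)$. Applying the triangle inequality for $\|\cdot\|_{(n)}$ to the integral representation of the difference on the left, and expanding the right-hand side $\varphi(A-B)$ by the same integral representation (noting that the Ky Fan norms are additive on positive operators, so that integration and $\|\cdot\|_{(n)}$ commute in this direction), it suffices to establish the single building-block estimate
$$\|\varphi_\lambda(A)-\varphi_\lambda(B)\|_{(n)} \leq \|\varphi_\lambda(A-B)\|_{(n)}, \qquad \lambda>0,\; n\in\N,$$
since the linear term $a_\varphi(A-B)$ contributes identically to both sides.

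The main obstacle is this $\lambda$-by-$\lambda$ estimate, which is the substantive content of the Ando-Kosaki theorem \cite{A2988}, \cite[Appendix]{HN1989}. Writing $C=A-B\geq 0$, the resolvent identity gives
$$\varphi_\lambda(A)-\varphi_\lambda(B) = \lambda(\lambda+B)^{-1}C(\lambda+A)^{-1}, \quad \varphi_\lambda(C)=C^{1/2}(\lambda+C)^{-1}C^{1/2},$$
and the comparison between their Ky Fan norms cannot be obtained by crude operator-norm estimates on the resolvents alone. The standard argument exploits $A\geq C$ (a consequence of $B\geq 0$), which gives $(\lambda+A)^{-1}\leq (\lambda+C)^{-1}$, together with a variational characterization of $\|\cdot\|_{(n)}$: one chooses the orthogonal projection $P$ of rank $n$ realizing $\|\varphi_\lambda(A)-\varphi_\lambda(B)\|_{(n)} = |\Tr(UP[\varphi_\lambda(A)-\varphi_\lambda(B)])|$ for a suitable partial isometry $U$, factors $C=C^{1/2}C^{1/2}$, and applies the operator Cauchy-Schwarz inequality to bound the resulting expression by $\|\varphi_\lambda(C)\|_{(n)}$. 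Once this key estimate is in place, letting $n\to \infty$ shows that $\varphi(A-B)\in S_1(H)$ forces $\varphi(A)-\varphi(B)\in S_1(H)$, and the trace inequality follows because both operators are in fact non-negative (the left-hand side by operator monotonicity of $\varphi$ together with $A\geq B$, the right-hand side from $C\geq 0$ and $\varphi(0)=0$).
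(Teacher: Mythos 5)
The paper does not prove this theorem; it states it as a recalled result and cites Ando \cite{A2988} and Kosaki \cite[Appendix]{HN1989}, then proves only the special case $\varphi(x)=x^q$ (Corollary \ref{qtrace}) by a self-contained argument. So you are attempting more than the paper does. Your reduction via the integral representation to the building blocks $\varphi_\lambda(x)=x/(\lambda+x)$ is indeed the standard route, and your identification of what must be shown is correct.

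There are two problems with the write-up, however. First, your justification ``Ky Fan norms are additive on positive operators, so that integration and $\|\cdot\|_{(n)}$ commute'' is false as a general statement: for $P=\mathrm{diag}(1,0)$, $Q=\mathrm{diag}(0,1)$ one has $\|P\|_{(1)}+\|Q\|_{(1)}=2>1=\|P+Q\|_{(1)}$. What is actually needed, and what does hold, is that with $C=A-B\geq 0$ the operators $C$, $\varphi_\lambda(C)$ $(\lambda>0)$, and $\varphi(C)$ are all functions of $C$, hence simultaneously diagonal in an eigenbasis of $C$, and since each $\varphi_\lambda$ is increasing the $k$-th largest eigenvalue of $\varphi_\lambda(C)$ is $\varphi_\lambda(s_k(C))$ with the \emph{same} eigenvector for every $\lambda$. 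This is the precise reason $\|\varphi(C)\|_{(n)}=a_\varphi\|C\|_{(n)}+\int_0^\infty\|\varphi_\lambda(C)\|_{(n)}\,\lambda\,d\mu_\varphi(\lambda)$, and you should say so rather than invoke a false general principle. Second, and more seriously, the core estimate
$$\|\varphi_\lambda(A)-\varphi_\lambda(B)\|_{(n)}\leq\|\varphi_\lambda(A-B)\|_{(n)}$$
is the entire content of the Ando--Kosaki theorem, and your sketch (``choose a rank-$n$ projection, factor $C=C^{1/2}C^{1/2}$, apply operator Cauchy--Schwarz'') does not constitute a proof. The inequality $\lambda(\lambda+B)^{-1}C(\lambda+A)^{-1}$ versus $C^{1/2}(\lambda+C)^{-1}C^{1/2}$ is genuinely delicate: the singular values of the left-hand side involve the two noncommuting resolvents $(\lambda+A)^{-1}$ and $(\lambda+B)^{-1}$ sandwiching $C$, and a direct Cauchy--Schwarz step of the kind you describe does not cleanly produce the symmetric quantity on the right. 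If you want to supply a proof rather than cite the result, you would need to carry out Ando's argument in full; as written, the central step is left as an assertion, which is a genuine gap.
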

 
As we use only the following special case, we state it separately and give an elementary proof. 

\begin{cor}\label{qtrace} 
Let $0<q<1$, and let $A,B\in B(H)$ be positive operators. 
We further assume $A\geq B$ and $A-B\in S_q(H)$. 
Then $A^q-B^q\in S_1(H)$ and 
$$\Tr(A^q-B^q)\leq \Tr((A-B)^q).$$ 
\end{cor}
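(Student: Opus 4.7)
Set $D=A-B\ge 0$. The strategy is to express $\Tr(A^q-B^q)$ as the time integral of a positive derivative, then dominate the integrand using L\"owner--Heinz. Writing
$$A^q - B^q = \int_0^1 \frac{d}{dt}(B+tD)^q\, dt$$
as a Bochner integral (the integrand is norm-continuous in $t$), and using the integral representation $x^q = \frac{\sin(q\pi)}{\pi}\int_0^\infty \frac{\lambda^{q-1}x}{\lambda+x}\,d\lambda$, one computes
$$\frac{d}{dt}(B+tD)^q = \frac{\sin(q\pi)}{\pi}\int_0^\infty \lambda^q (\lambda+B+tD)^{-1}D(\lambda+B+tD)^{-1}\,d\lambda\ \geq\ 0.$$
Comparing with the companion identity $qx^{q-1} = \frac{\sin(q\pi)}{\pi}\int_0^\infty \frac{\lambda^q}{(\lambda+x)^2}\,d\lambda$ yields the trace-level chain rule
$$\Tr(A^q-B^q) = q\int_0^1 \Tr\bigl[(B+tD)^{q-1}D\bigr]\,dt.$$
Because the integrand is a \emph{positive} operator, the interchange of $\Tr$ and $\int$ is justified by monotone convergence, even though the individual traces $\Tr[(B+tD)^q]$ may be infinite.

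For the pointwise bound, $B+tD\ge tD$ since $B\ge 0$, so $(B+tD+\epsilon I)^{q-1}\le (tD+\epsilon I)^{q-1}$ for every $\epsilon>0$ by L\"owner--Heinz (which makes $x^{q-1}$ operator monotone decreasing on $(0,\infty)$ when $q\in(0,1)$). Conjugating by $D^{1/2}$ and sending $\epsilon\to 0^+$ (monotone convergence again) gives the key operator inequality
$$D^{1/2}(B+tD)^{q-1}D^{1/2}\ \leq\ t^{q-1}D^q,$$
where the right-hand side uses $D^{1/2}D^{q-1}D^{1/2}=D^q$ on the range of $D$, with both sides vanishing on $\Ker D$. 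Taking traces yields $\Tr[(B+tD)^{q-1}D]\le t^{q-1}\Tr(D^q)$.

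Integrating in $t$ produces
$$\Tr(A^q-B^q)\ \leq\ q\,\Tr(D^q)\int_0^1 t^{q-1}\,dt\ =\ \Tr(D^q)\ <\ \infty,$$
and combined with $A^q\ge B^q$ (L\"owner--Heinz once more) this shows $A^q-B^q$ is a positive operator with finite trace, so $A^q-B^q\in S_1(H)$. The main technical point is the rigorous justification of the trace chain rule, handled by exploiting the positivity of the integrand and invoking monotone convergence, together with the $\epsilon$-regularization needed when $D$ has a nontrivial kernel.
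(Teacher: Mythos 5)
Your proof is correct, and it takes a genuinely different route from the paper's. The paper works in the resolvent variable $\lambda$: via Eq.~(\ref{trint}) it reduces the claim to bounding $\frac{\sin q\pi}{\pi}\int_0^\infty \Tr\left(\frac{1}{\lambda+B}-\frac{1}{\lambda+A}\right)\lambda^q\,d\lambda$, rewrites the integrand as $-h'(\lambda)$ with $h(\lambda)=\Tr\log\left(1+D^{1/2}(\lambda+B)^{-1}D^{1/2}\right)$, integrates by parts in $\lambda$, and discards $B\geq 0$ via $h(\lambda)\leq \sum_n\log(1+s_n(D)/\lambda)$. You work instead in the path variable $t$: you write $A^q-B^q=\int_0^1\frac{d}{dt}(B+tD)^q\,dt$, identify the derivative as $\frac{\sin q\pi}{\pi}\int_0^\infty\lambda^q(\lambda+B+tD)^{-1}D(\lambda+B+tD)^{-1}d\lambda\geq 0$, and discard $B\geq 0$ via the L\"owner--Heinz bound $D^{1/2}(B+tD)^{q-1}D^{1/2}\leq t^{q-1}D^q$, after which $q\int_0^1 t^{q-1}dt=1$ closes the argument. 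Both proofs exploit $B\geq 0$ at exactly one pivotal step and both must handle the fact that $(B+tD)^{q-1}$ (resp.\ $(\lambda+B)^{-1}$) may be unbounded on $\Ker(B)$; the paper's $\Tr\log$ device is arguably slicker for the convergence bookkeeping, while your Duhamel-style path argument is more conceptually transparent and makes the role of operator monotonicity explicit. One small correction: $\frac{d}{dt}(B+tD)^q$ is not norm-continuous at $t=0$ when $B$ is not bounded below --- a direct bound (splitting the $\lambda$-integral at $\lambda=t\|D\|$) gives $\left\|\frac{d}{dt}(B+tD)^q\right\|\lesssim t^{q-1}$, so the integrand is only Bochner integrable on $(0,1]$, not continuous on $[0,1]$; since $t^{q-1}$ is integrable this does not affect the conclusion, but the justification you state is slightly too strong.
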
 

\begin{proof} Let $D=A-B$. 
Thanks to Eq.(\ref{trint}), it suffices to show
$$\frac{\sin(q\pi)}{\pi}\int_0^\infty \Tr\left(\frac{1}{\lambda+B}-\frac{1}{\lambda+B+D}\right)\lambda^qd\lambda\leq \Tr(D^q).$$
The resolvent identity shows 
\begin{align*}
\lefteqn{\frac{1}{\lambda+B}-\frac{1}{\lambda+B+D}=(\lambda+B)^{-1}D(\lambda+B+D)^{-1}} \\
 &=(\lambda+B)^{-1}D\left(1+(\lambda+B)^{-1}D\right)^{-1}(\lambda+B)^{-1}\\
  &=(\lambda+B)^{-1}D^{1/2}\left(1+D^{1/2}(\lambda+B)^{-1}D^{1/2}\right)^{-1}D^{1/2}(\lambda+B)^{-1},\\
\end{align*}
and 
\begin{align*}
\lefteqn{\Tr\left(\frac{1}{\lambda+B}-\frac{1}{\lambda+B+D}\right)} \\
 &=\Tr\left( (\lambda+B)^{-1}D^{1/2}\left(1+D^{1/2}(\lambda+B)^{-1}D^{1/2}\right)^{-1}D^{1/2}(\lambda+B)^{-1}\right)\\
 &=\Tr\left(\left(1+D^{1/2}(\lambda+B)^{-1}D^{1/2}\right)^{-1}D^{1/2}(\lambda+B)^{-2}D^{1/2}\right)\\
 &=-\frac{d}{d\lambda}\Tr\left(\log \left(1+D^{1/2}(\lambda+B)^{-1}D^{1/2}\right)\right), 
\end{align*}
where we used \cite[Lemma 9.16]{S2012}. 
Thus letting 
$$h(\lambda)=\Tr\left(\log \left(1+D^{1/2}(\lambda+B)^{-1}D^{1/2}\right)\right),$$
we get
$$\frac{\sin(q\pi)}{\pi}\int_0^\infty \Tr\left(\frac{1}{\lambda+B}-\frac{1}{\lambda+B+D}\right)\lambda^qd\lambda=
-\frac{\sin(q\pi)}{\pi}\int_0^\infty h'(\lambda)\lambda^qd\lambda.$$

We claim that integration by part implies  
$$-\frac{\sin(q\pi)}{\pi}\int_0^\infty h'(\lambda)\lambda^qd\lambda
=\frac{q\sin(q\pi)}{\pi}\int_{0}^\infty h(\lambda)\lambda^{q-1}dq.$$
To verify the claim, it suffices to show the convergence of the right-hand side, and 
$$\lim_{\lambda\to +0}h(\lambda)\lambda^q=0,$$
as we have $h(\lambda)=O(1/\lambda)$ for large $\lambda$. 
In fact, since $h(\lambda)$ is non-negative and monotone decreasing, the latter follows from the former. 
Since 
$$0\leq h(\lambda)\leq \Tr(\log(1+\frac{1}{\lambda}D))=\sum_{n=1}^\infty \log(1+\frac{s_n(D)}{\lambda}),$$
and 
$$\sum_{n=1}^\infty \int_0^\infty \log(1+\frac{s_n(D)}{\lambda})\lambda^{q-1}d\lambda=
\sum_{n=1}^\infty s_n(D)^q\int_0^\infty \log(1+\frac{1}{t})t^{q-1}dt,$$
we get 
$$\frac{q\sin(q\pi)}{\pi}\int_{0}^\infty h(\lambda)\lambda^{q-1}dq
\leq \Tr(D^q)\frac{q\sin(q\pi)}{\pi}\int_0^\infty \log(1+\frac{1}{t})t^{q-1}dt<\infty,$$
and the claim is shown. 

Integration by part again implies  
$$\frac{q\sin(q\pi)}{\pi}\int_0^\infty \log(1+\frac{1}{t})t^{q-1}dt=\frac{\sin(q\pi)}{\pi}\int_0^\infty \frac{t^{q-1}}{1+t}dt=1,$$
and the proof is finished. 
\end{proof}

\begin{remark} The function $h(\lambda)$ in the above proof is nothing but 
$$\int_\R \frac{\xi_{A,B}(x)}{\lambda+x}dx.$$
\end{remark}

\begin{lemma} Let $0<q<1$, and let $A,B\in B(H)$ be positive operators satisfying $A-B\in S_q(H)$. 
Then $A^q-B^q\in S_1(H)$ and 
$$\Tr(A^q-B^q)=\frac{\sin(q\pi)}{\pi}\int_0^\infty \Tr\left(\frac{1}{\lambda+B}-\frac{1}{\lambda+A}\right)\lambda^qd\lambda,$$
where the integral in the right-hand side converges absolutely. 
\end{lemma}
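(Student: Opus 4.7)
The plan is to reduce to the monotone case handled by Corollary \ref{qtrace} by inserting a common dominating operator. Write $D = A - B$, which is self-adjoint and belongs to $S_q(H)$, and take its Jordan decomposition $D = D_+ - D_-$ with $D_\pm \geq 0$ and $D_+ D_- = 0$. Since the singular values satisfy $s_n(D_\pm) \leq s_n(|D|) = s_n(D)$, both $D_\pm$ lie in $S_q(H)$, and because $0 < q \leq 1$ we also have $S_q(H) \subset S_1(H)$. Set $C := A + D_- = B + D_+$; then $C$ is a positive operator satisfying $C \geq A$ and $C \geq B$, with $C - A = D_-$ and $C - B = D_+$ both in $S_q(H)$.

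With this setup I would apply Corollary \ref{qtrace} to the pairs $(C, A)$ and $(C, B)$ separately to obtain $C^q - A^q,\ C^q - B^q \in S_1(H)$, and then conclude
$$A^q - B^q = (C^q - B^q) - (C^q - A^q) \in S_1(H).$$

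For the integral formula, I would apply Eq.(\ref{trint}) with $\varphi(x) = x^q$ (for which $a_\varphi = 0$ and $\mu_\varphi(d\lambda) = \frac{\sin(q\pi)}{\pi}\lambda^{q-2}d\lambda$) to each of the same two pairs, producing
$$\Tr(C^q - A^q) = \frac{\sin(q\pi)}{\pi}\int_0^\infty \Tr\!\left(\frac{1}{\lambda+A} - \frac{1}{\lambda+C}\right)\lambda^q\, d\lambda,$$
together with its analogue for $(C, B)$. Each integrand is $\lambda^q$ times the trace of a positive trace-class operator, so both integrals converge. Subtracting the two identities and invoking linearity of the trace yields the desired expression for $\Tr(A^q - B^q)$.

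The step requiring the most care will be absolute convergence of the final integral: since $A - B$ is not assumed positive, $\Tr((\lambda+B)^{-1} - (\lambda+A)^{-1})\lambda^q$ can change sign in $\lambda$, so subtraction of the two integrals is not automatically permissible. I would handle this by the triangle inequality
$$\left\|(\lambda+B)^{-1} - (\lambda+A)^{-1}\right\|_1 \leq \Tr\!\left((\lambda+B)^{-1} - (\lambda+C)^{-1}\right) + \Tr\!\left((\lambda+A)^{-1} - (\lambda+C)^{-1}\right),$$
each summand being the trace norm of a positive trace-class operator. Integrating against $\lambda^q d\lambda$ and invoking the two identities above bounds the total by $\frac{\pi}{\sin(q\pi)}\!\left[\Tr(C^q - A^q) + \Tr(C^q - B^q)\right] < \infty$, simultaneously establishing absolute convergence and legitimizing the subtraction step.
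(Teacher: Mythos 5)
Your argument follows the same route as the paper: decompose $D=A-B$ via its Jordan decomposition, form the common majorant $C=A+D_-=B+D_+$, and apply Corollary \ref{qtrace} together with Eq.~(\ref{trint}) to the pairs $(C,A)$ and $(C,B)$ before subtracting. The paper states the decomposition $A^q-B^q=(B+D_+)^q-B^q-\bigl((A+D_-)^q-A^q\bigr)$ and leaves the rest implicit, so your write-up is simply a more detailed version of the same proof; in particular, your explicit triangle-inequality estimate $\|(\lambda+B)^{-1}-(\lambda+A)^{-1}\|_1\le\Tr\bigl((\lambda+B)^{-1}-(\lambda+C)^{-1}\bigr)+\Tr\bigl((\lambda+A)^{-1}-(\lambda+C)^{-1}\bigr)$ correctly justifies the absolute-convergence claim that the paper's proof passes over.
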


\begin{proof} Let $D=A-B$, and let $D_+$ and $D_-$ be the positive part and negative part of $D$ respectively. 
Then $D_+,D_-\in S_q(H)$, and $A+D_-=B+D_+$. 
Now the statement follows from Corollary \ref{qtrace} and 
$$A^q-B^q=(B+D_+)^q-B^q-\left( (A+D_-)^q-A^q \right).$$ 
\end{proof}

\begin{theorem}\label{pthm} 
Let $f\in W_2^{1/2}(\T)\cap W_1^1(\T)$, and let $0<p<2$. 
If $f\in B_p^{1/p}(\T)$, Eq.(\ref{ptr}) holds. 
If moreover 
$$\int_\T |f(t)|^{p-1}|f'(t)|dt<\infty,$$
with convention that the integrand is 0 whenever $f(t)=0$, then  Eq.(\ref{pKF}) holds. 
\end{theorem}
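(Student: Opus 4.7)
The plan is to apply the previous lemma with $A = T_f^*T_f$, $B = T_fT_f^*$, and $q = p/2 \in (0,1)$, then to identify the resulting resolvent trace via Theorem~\ref{HHtoKF}. First I verify the hypothesis $A - B \in S_q(H^2)$. Using the identities recorded in Section~2, $A - B = [T_f^*, T_f] = H_{\overline{f}}^*H_{\overline{f}} - H_f^*H_f$. Because $B_p^{1/p}(\T)$ is closed under the Riesz projections, the assumption $f \in B_p^{1/p}(\T)$ forces both $P_+f$ and $\overline{P_-f}$ to lie in $B_p^{1/p}(\T)$; Peller's criterion (Theorem~\ref{Peller}) then gives $H_f, H_{\overline{f}} \in S_p(H^2, {H^2}^\perp)$, so $H_f^*H_f$ and $H_{\overline{f}}^*H_{\overline{f}}$ belong to $S_{p/2}(H^2) = S_q(H^2)$, and hence so does $A - B$. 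Invoking the preceding lemma yields Eq.~(\ref{ptr}) together with the integral representation
\begin{equation*}
\Tr(|T_f|^p - |T_f^*|^p) = \frac{\sin(q\pi)}{\pi}\int_0^\infty \Tr\left(\frac{1}{\lambda+B} - \frac{1}{\lambda+A}\right)\lambda^q\, d\lambda.
\end{equation*}

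Next, for each fixed $\lambda > 0$ I would compute the inner trace by applying Theorem~\ref{HHtoKF} to $\varphi_\lambda(x) = x/(\lambda+x)$. Since $\varphi_\lambda$ is smooth on a neighborhood of $[0, \|f\|_\infty^2]$, it lies in $\cW_1[0, \|f\|_\infty^2]$, and the theorem yields
\begin{equation*}
\Tr(\varphi_\lambda(A) - \varphi_\lambda(B)) = \frac{1}{2\pi i}\int_\T \frac{|f(t)|^2}{\lambda + |f(t)|^2}\frac{f'(t)}{f(t)}\, dt.
\end{equation*}
Combining with the identity $\varphi_\lambda(A) - \varphi_\lambda(B) = \lambda((\lambda+B)^{-1} - (\lambda+A)^{-1})$ and substituting into the representation above produces the iterated integral
\begin{equation*}
\Tr(|T_f|^p - |T_f^*|^p) = \frac{\sin(q\pi)}{2\pi^2 i}\int_0^\infty\int_\T \frac{|f(t)|^2 \lambda^{q-1}}{\lambda + |f(t)|^2}\frac{f'(t)}{f(t)}\, dt\, d\lambda.
\end{equation*}

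The main obstacle will be justifying the exchange of the order of integration, and this is precisely where the hypothesis $\int_\T |f|^{p-1}|f'|\, dt < \infty$ enters. For each $t$ with $f(t) \neq 0$, the substitution $\lambda = |f(t)|^2 u$ gives
\begin{equation*}
\int_0^\infty \frac{|f(t)|^2 \lambda^{q-1}}{\lambda + |f(t)|^2}\, d\lambda = |f(t)|^{2q}\int_0^\infty \frac{u^{q-1}}{1+u}\, du = \frac{\pi |f(t)|^p}{\sin(q\pi)}.
\end{equation*}
Applied to absolute values, Tonelli's theorem reduces integrability of the double integrand to finiteness of $\frac{\pi}{\sin(q\pi)}\int_\T |f(t)|^{p-1}|f'(t)|\, dt$, which is exactly the assumption (the convention at zeros of $f$ ensures the integrand vanishes there, so no delicate issue arises on the zero set). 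Once Fubini is applied, the prefactor $\sin(q\pi)/\pi$ cancels against the $\lambda$-integral, leaving $\frac{1}{2\pi i}\int_\T |f|^p \frac{f'}{f}\, dt$, which is Eq.~(\ref{pKF}).
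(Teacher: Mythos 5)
Your proof is correct and follows essentially the same route as the paper: set $A=T_f^*T_f$, $B=T_fT_f^*$, $q=p/2$, get $A-B\in S_q$ from Peller, apply the preceding lemma for the resolvent-integral representation, and evaluate the inner trace via Theorem~\ref{HHtoKF} before exchanging the order of integration. The only cosmetic difference is that you apply Theorem~\ref{HHtoKF} to $\varphi_\lambda(x)=x/(\lambda+x)$ and then relate this to the resolvent difference, whereas the paper applies it directly to $x\mapsto 1/(\lambda+x)$; you also spell out the Tonelli step (using $\int_0^\infty u^{q-1}/(1+u)\,du=\pi/\sin(q\pi)$) that the paper leaves implicit with ``the Fubini theorem.''
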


\begin{proof} Let $A=T_f^*T_f$, $B=T_fT_f^*$, and $q=p/2$. 
Then 
$$A-B=-H_f^*H_f+H_{\overline{f}}^*H_{\overline{f}}\in S_q(H^2)$$
thanks to the Peller theorem. 
Thus we get $A^q-B^q\in S_q(H^2)$, and  
$$\Tr(A^q-B^q)=\frac{\sin(q\pi)}{\pi}\int_0^\infty \Tr\left(\frac{1}{\lambda+B}-\frac{1}{\lambda+A}\right)\lambda^qd\lambda.$$ 
Theorem \ref{HHtoKF} implies 
\begin{align*}
\Tr(A^q-B^q)&=\frac{\sin(q\pi)}{\pi}\int_0^\infty \frac{1}{2\pi i}\int_\T \left(\frac{1}{\lambda}-\frac{1}{\lambda +|f(t)^2|}\right)
\frac{f'(t)}{f(t)}dt\lambda^qd\lambda \\
 &=\frac{\sin(q\pi)}{\pi}\frac{1}{2\pi i}\int_0^\infty \int_\T \varphi_\lambda(|f(t)|^2)\lambda^{q-1}\frac{f'(t)}{f(t)}dtd\lambda.
\end{align*}
Now the second statement follows from the Fubini theorem. 
\end{proof}

\begin{cor} Eq.(\ref{ptr}) holds for every $f\in C^\infty(\T)$ and every $p>0$. 
If moreover 
$$\int_\T |f(t)|^{p-1}|f'(t)|dt<\infty,$$
with convention that the integrand is 0 whenever $f(t)=0$, Eq.(\ref{pKF}) holds. 
\end{cor}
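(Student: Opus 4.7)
The plan is to split into the two ranges $p \geq 2$ and $0 < p < 2$. The case $p \geq 2$ has already been recorded in the opening of Section~\ref{om}: Theorem~\ref{HHtoKF} applied to $\varphi(x) = x^{p/2} \in \cW_1[0,\|f\|_\infty^2]$ gives both Eq.(\ref{ptr}) and Eq.(\ref{pKF}) directly. Moreover, for $f \in C^\infty(\T)$ with $p \geq 1$ the integrand $|f|^{p-1}|f'|$ is bounded, so the integrability hypothesis is automatic, and nothing further is needed in this range. It remains to treat $0 < p < 2$.

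For this range the strategy is to invoke Theorem~\ref{pthm}. Since $C^\infty(\T) \subset W_2^{1/2}(\T) \cap W_1^1(\T)$ trivially, the only nontrivial hypothesis to verify is $f \in B_p^{1/p}(\T)$. I would verify this by passing to the analytic and co-analytic parts
$$F_+(z) = \sum_{n \geq 0} \hf(n)\, z^n, \qquad F_-(z) = \sum_{n \geq 1} \overline{\hf(-n)}\, z^n, \qquad z \in \D.$$
Because $f$ is smooth, its Fourier coefficients $\hf(n)$ decay faster than any polynomial in $|n|$, hence both $F_+$ and $F_-$ extend holomorphically to a neighborhood of $\overline{\D}$; in particular every derivative $F_\pm^{(n)}$ is bounded on $\D$.

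Choosing any $n \in \N$ with $pn > 1$, one estimates
$$\int_\D \bigl|(1-|z|^2)^n F_\pm^{(n)}(z)\bigr|^p \frac{dA(z)}{(1-|z|^2)^2} \leq \|F_\pm^{(n)}\|_\infty^p \int_\D (1-|z|^2)^{pn-2}\, dA(z) < \infty,$$
the last integral being finite since $pn - 2 > -1$. By Lemma~\ref{B_p} this yields $F_\pm \in B_p$, i.e.\ $f \in B_p^{1/p}(\T)$. Theorem~\ref{pthm} then delivers Eq.(\ref{ptr}) unconditionally and, under the assumed integrability hypothesis on $|f|^{p-1}|f'|$, also Eq.(\ref{pKF}). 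I do not anticipate a serious obstacle; the core of the argument is the routine verification, via Lemma~\ref{B_p}, that $C^\infty(\T) \subset B_p^{1/p}(\T)$ for every $p > 0$.
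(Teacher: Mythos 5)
Your argument follows the paper's: the substance is the inclusion $C^\infty(\T)\subset\bigcap_{p>0}B_p^{1/p}(\T)$, which the paper merely cites and you spell out via Lemma~\ref{B_p}, with the range $p\geq 2$ already covered by Theorem~\ref{HHtoKF} as recalled at the opening of Section~\ref{om}. However, one intermediate step is false as stated: super-polynomial decay of $\hf(n)$ does \emph{not} imply that $F_\pm$ extend holomorphically past $\overline{\D}$ --- that would require exponential decay, and e.g.\ $\hf(n)=e^{-\sqrt{n}}$ for $n\geq 1$ gives a $C^\infty$ symbol whose analytic part has radius of convergence exactly $1$. Fortunately the conclusion you actually use, namely that each $F_\pm^{(n)}$ is bounded on $\D$, follows directly from the rapid decay, since $\sup_{z\in\D}|F_+^{(n)}(z)|\leq\sum_{k\geq n}\frac{k!}{(k-n)!}|\hf(k)|<\infty$ and similarly for $F_-$. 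Deleting the erroneous holomorphic-extension clause and keeping this direct estimate repairs the argument, after which the application of Lemma~\ref{B_p} and Theorem~\ref{pthm} is correct.
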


\begin{proof} The statement follows from 
$$C^\infty(\T)\subset \bigcap_{p>0}B_p^{1/p}(\T).$$
\end{proof}

\begin{cor} Assume $f\in B_p^{1/p}(\T)$ for all $p>0$. 
Then under the assumption of Theorem \ref{Witten}, the following limit exists: 
$$\lim_{p\to +0}\Tr(|T_f|^p-|T_f^*|^p)=-\ind_W T_f. $$ 
\end{cor}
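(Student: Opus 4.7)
The plan is to reduce the corollary to a limit of integral formulas via Theorem \ref{pthm}, and then handle that limit by splitting $\T$ into a small neighborhood of the zeros of $f$ and its complement. The hypothesis $f\in B_p^{1/p}(\T)$ for every $p>0$ supplies the Besov condition of Theorem \ref{pthm}, while the local structure $|f(t)|^2=|t-t_j|^{\beta_j}h_j(t)$ and $f'(t)/f(t)=g_j(t)/(t-t_j)$ from the Witten hypothesis gives $|f(t)|^{p-1}|f'(t)|=O(|t-t_j|^{p\beta_j/2-1})$ near each zero, which is integrable for every $p>0$. Thus
$$\Tr(|T_f|^p-|T_f^*|^p)=\frac{1}{2\pi i}\int_\T|f(t)|^p\frac{f'(t)}{f(t)}dt$$
for all $p\in(0,2)$, and since $\ind_W T_f=-\frac{1}{2\pi i}\,\mathrm{p.v.}\int_\T f'/f\,dt$ by Theorem \ref{Witten}, the corollary reduces to showing
$$\lim_{p\to+0}\int_\T|f(t)|^p\frac{f'(t)}{f(t)}dt=\mathrm{p.v.}\int_\T\frac{f'(t)}{f(t)}dt.$$

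Next, set $I_\varepsilon=\bigcup_{j=1}^m(t_j-\varepsilon,t_j+\varepsilon)$ for $\varepsilon\in(0,\delta]$ as in the proof of Theorem \ref{Witten}. On $\T\setminus I_\varepsilon$ the function $|f|$ is bounded above and below by positive constants, so $|f|^p\to 1$ uniformly as $p\to +0$ while $f'/f\in L^1(\T\setminus I_\varepsilon)$; dominated convergence yields
$$\lim_{p\to+0}\int_{\T\setminus I_\varepsilon}|f(t)|^p\frac{f'(t)}{f(t)}dt=\int_{\T\setminus I_\varepsilon}\frac{f'(t)}{f(t)}dt,$$
whose $\varepsilon\to+0$ limit is precisely the required principal value. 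It therefore suffices to establish the iterated limit
$$\lim_{\varepsilon\to+0}\limsup_{p\to+0}\left|\int_{I_\varepsilon}|f(t)|^p\frac{f'(t)}{f(t)}dt\right|=0.$$

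On each $(t_j-\varepsilon,t_j+\varepsilon)$ I would expand the integrand as $|t-t_j|^{p\beta_j/2}h_j(t)^{p/2}g_j(t)/(t-t_j)$ and decompose $g_j(t)=g_j(t_j)+(g_j(t)-g_j(t_j))$ together with $h_j(t)^{p/2}=h_j(t_j)^{p/2}+(h_j(t)^{p/2}-h_j(t_j)^{p/2})$. The leading piece $g_j(t_j)h_j(t_j)^{p/2}|t-t_j|^{p\beta_j/2}/(t-t_j)$ is odd around $t_j$ and integrates to zero, while each of the three remaining cross terms carries either the bounded factor $(g_j(t)-g_j(t_j))/(t-t_j)\in L^\infty$ or the difference $h_j(t)^{p/2}-h_j(t_j)^{p/2}$. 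Applying the mean value theorem to $x\mapsto x^{p/2}$, and using hypothesis (3) of Theorem \ref{Witten} to bound $h_j$ away from $0$ on $[t_j-\delta,t_j+\delta]$, one gets $|h_j(t)^{p/2}-h_j(t_j)^{p/2}|\leq C_j p|t-t_j|$ uniformly for $p\in(0,1]$. Each of these cross terms therefore integrates to $O(\varepsilon)$ uniformly in small $p$, which is exactly the required estimate.

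The main obstacle is this last uniform-in-$p$ estimate: the derivative of $x\mapsto x^{p/2}$ blows up at $x=0$ as $p\to +0$, so some positive lower bound on $h_j$ is essential. Hypothesis (3) of Theorem \ref{Witten} is exactly what rescues the argument, keeping the derivative of $h_j^{p/2}$ of order $p$ uniformly on $[t_j-\delta,t_j+\delta]$ and yielding the favourable $O(p|t-t_j|)$ bound that makes the cross terms negligible. Combining the three steps above then gives $\lim_{p\to+0}\Tr(|T_f|^p-|T_f^*|^p)=-\ind_W T_f$.
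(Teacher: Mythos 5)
Your proof is correct and follows essentially the same route the paper takes: apply Theorem \ref{pthm} (whose integrability hypothesis you correctly verify from the local structure at the zeros) to reduce to the scalar integral, then split $\T$ into $I_\varepsilon$ and its complement and run a dominated-convergence-plus-local-estimate argument exactly as in the proof of Theorem \ref{Witten}. The paper compresses the local estimate into the phrase ``in a similar way''; your direct decomposition of $g_j$ and $h_j^{p/2}$ around $t_j$, exploiting oddness of the leading term and the uniform-in-$p$ Lipschitz bound on $h_j^{p/2}$ afforded by $\min h_j>0$, is a clean and valid way to supply those details.
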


\begin{proof} In a similar way as in the proof of Theorem \ref{Witten}, we can show
$$\lim_{p\to +0}\Tr(|T_f|^p-|T_f^*|^p)
=\lim_{p\to +0}\frac{1}{2\pi i}\int_\T |f(t)|^p\frac{f'(t)}{f(t)}dt
=\frac{1}{2\pi i}\mathrm{p.v.}\int_\T \frac{f'(t)}{f(t)}dt,$$
and the statement follows from Theorem \ref{Witten}.
\end{proof}

Recall that if a symbol $f$ is analytic, we have $T_f^*T_f-T_fT_f^*=H_{\overline{f}}^*H_{\overline{f}}\geq 0$.

\begin{theorem}\label{apthm} 
Let $f\in B_2\cap H^\infty$, and let $F(z)=\tf(z)$ be the holomorphic extension of $f$ to $\D$. 
Then for every operator monotone function $\varphi$ on $[0,\infty)$, we have 
\begin{align*}
 \Tr(\varphi(T_f^*T_f)-\varphi(T_fT_f^*))&= \frac{1}{\pi}\int_\D \varphi'(|F(z)|^2)|F'(z)|^2dA(z).\\
\end{align*}
In particular, for every $p>0$,
\begin{align}\label{pint}
\Tr(|T_f|^p-|T_f^*|^p) &=\frac{p}{2\pi}\int_\D |F(z)|^{p-2}|F'(z)|^2dA(z) \\
 &= \lim_{r\to 1-0}\frac{1}{2\pi i}\int_{|z|=r}|F(z)|^p\frac{dF(z)}{F(z)}. \notag
\end{align}
\end{theorem}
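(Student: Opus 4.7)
The plan is to derive the main trace formula by combining the Nevanlinna-type representation (\ref{ieomf}) of operator monotone functions with the special case already established in Theorem \ref{AHHtoKF}. First observe that $f\in B_2\cap H^\infty$ together with Peller's theorem (Theorem \ref{Peller}) gives $H_{\overline f}\in S_2(H^2,{H^2}^\perp)$, and the analyticity of $f$ then yields $A:=T_f^*T_f\ge T_fT_f^*=:B$ together with $A-B=H_{\overline f}^*H_{\overline f}\in S_1(H^2)$. Operator monotonicity of $\varphi$ thus makes $\varphi(A)-\varphi(B)\ge 0$, so the trace in the statement is well-defined in $[0,\infty]$.

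The key step is to apply Theorem \ref{AHHtoKF} to each resolvent $\varphi_\lambda(x)=x/(\lambda+x)$ with $\lambda>0$, which belongs to $\cW_1[0,\|f\|_\infty^2]$ since it is smooth there. This yields
$$\Tr(\varphi_\lambda(A)-\varphi_\lambda(B))=\frac{1}{\pi}\int_\D \varphi_\lambda'(|F(z)|^2)|F'(z)|^2\,dA(z),$$
and likewise $\Tr(A-B)=\frac{1}{\pi}\int_\D|F'|^2\,dA$. Substituting these into Eq.\,(\ref{trint}) and invoking Tonelli to interchange the $\lambda$- and $z$-integrations, justified because every integrand is non-negative, the inner $\lambda$-integral becomes $\varphi'(|F(z)|^2)$ by differentiating (\ref{ieomf}) under the integral sign, which produces the first formula of the theorem.

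For $\varphi(x)=x^{p/2}$: when $p\ge 2$ the function already lies in $\cW_1[0,\|f\|_\infty^2]$, so Theorem \ref{AHHtoKF} applies directly; when $0<p<2$ the function is operator monotone and the formula just established applies. In either case $\varphi'(|F|^2)=(p/2)|F|^{p-2}$, which gives the middle expression in (\ref{pint}). The last equality is a Stokes calculation: away from the zeros of $F$, a direct $\bar\partial$-computation yields $d\bigl(|F|^p\,dF/F\bigr)=ip\,|F|^{p-2}|F'|^2\,dA$, so Stokes on $\{|z|\le r\}$ (with small disks excised around the finitely many zeros of $F$, whose contributions vanish by local integrability) gives
$$\frac{1}{2\pi i}\int_{|z|=r}|F(z)|^p\frac{dF(z)}{F(z)}=\frac{p}{2\pi}\int_{|z|\le r}|F|^{p-2}|F'|^2\,dA.$$
Letting $r\uparrow 1$, the right-hand side converges to $\frac{p}{2\pi}\int_\D|F|^{p-2}|F'|^2\,dA$ by monotone convergence, so the boundary integral limit exists and equals this common value.

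The main technical obstacle is simultaneously controlling trace-norm convergence and the applicability of Fubini in the operator-monotone step, since $\mu_\varphi$ may be an infinite measure. This is handled by the positivity of every term in (\ref{trint}) (guaranteed by $A\ge B$ together with operator monotonicity of each $\varphi_\lambda$) combined with the lower semicontinuity of the trace, exactly as in the derivation of (\ref{trint}) already presented in the paper.
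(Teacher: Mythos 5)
Your proof is correct and follows essentially the same route as the paper: apply Eq.\,(\ref{trint}) together with Theorem \ref{AHHtoKF} to the resolvents $\varphi_\lambda$, interchange the $\lambda$- and $z$-integrals (your invocation of Tonelli rather than Fubini is the slightly cleaner justification, since the trace may a priori be $+\infty$), and obtain the boundary-integral form of (\ref{pint}) via Stokes' theorem after checking that the contributions near the zeros of $F$ vanish. Your explicit separation of the $p\geq 2$ case, where $x^{p/2}$ lies in $\cW_1[0,\|f\|_\infty^2]$ but is no longer operator monotone, from the $0<p<2$ case is a small but welcome refinement that the paper leaves implicit.
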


\begin{proof} 
From Eq.(\ref{trint}), we have 
\begin{align*}
\lefteqn{\Tr(\varphi(T_f^*T_f)-\varphi(T_f^*T_f))} \\
 &=a_\varphi\Tr([T_f^*,T_f])+\int_0^\infty
\Tr\left(\frac{1}{\lambda+T_fT_f^*}-\frac{1}{\lambda+T_f^*T_f}\right)\lambda^2 
d\mu_\varphi(\lambda),
\end{align*}
and Theorem \ref{AKF} implies that it is equal to 
$$\frac{1}{\pi}\int_\D a_\varphi |F'(z)|^2dA(z)+\int_0^\infty\frac{1}{\pi}\int_\D
\frac{1}{(\lambda+|F(z)|^2)^2}|F'(z)|^2dA(z)\lambda^2 d\mu_\varphi(\lambda).$$
Since 
$$\varphi'(x)=a_\varphi+\int_0^\infty \frac{1}{(\lambda+x)^2}\lambda^2 d\mu_\varphi(x),$$
the first statement follows from the Fubini theorem. 

Let $a\in \D$ be a zero of $F$ of order $n$. 
Since $F(z)$ is of the form $(z-a)^nG(z)$ with a holomorphic function $G(z)$ satisfying $G(a)\neq 0$, we get
$$\lim_{\varepsilon\to+0}\int_{|z-a|\leq \varepsilon}|F(z)|^{p-2}|F'(z)|^2dA(z)=0,$$
$$\lim_{\varepsilon\to+0}\int_{|z-a|=\varepsilon}|F(z)|^p\frac{dF(z)}{F(z)}=0.$$
Thus for $0<r<1$, the Stokes theorem implies 
$$\frac{p}{2\pi} \int_{|z|\leq r} |F(z)|^{p-2} |F'(z)|^2dA(z)
=\frac{1}{2\pi i}\int_{|z|=r}|F(z)|^p\frac{dF(z)}{F(z)},$$
and the second statement follows. 
\end{proof}

\begin{remark} For all functions in Example \ref{ExAnBe},(1),(2),(3), and for all $p>0$, the integral 
in Eq.(\ref{pint}) converges. 
Thus for such a symbol, Eq.(\ref{ptr}) holds for all $p>0$.  
This means that Theorem \ref{pthm} is not at all sharp: there are plenty of examples of analytic symbols $f$ 
such that Eq.(\ref{ptr}) holds for all $p>0$ while $f\notin B_p$ for some $p>0$. 
\end{remark}

\begin{problem} It is an interesting problem to characterize the class of holomorphic functions $F$, for which 
the integral in Eq.(\ref{pint}) converges. 
Does it converge for all $F\in B_1$ and all $p>0$ ? 
\end{problem}
 
We finish this paper with examples of explicit computations for fun.  

\begin{example} Let $S$ be the unilateral shift, and let $a,p>0$.  
Since $S+a=T_{e^{it}+a}$, we have $|S+a|^p-|S^*+a|^p\in S_1(H^2)$, and 
$$\Tr(|S+a|^p-|S^*+a|^p)=\frac{1}{2\pi i}\int_{|z|=1} \frac{|z+a|^p}{z+a}dz.$$
For $a=1$, this can be evaluated as 
$$\Tr(|S+1|^p-|S^*+1|^p)=\frac{2^p}{\pi}\int_0^{\frac{\pi}{2}}\cos^p\theta d\theta
=\frac{\Gamma(1+p)}{2\Gamma(1+\frac{p}{2})^2}.$$
For $p=1$ and $a<1$, we can compute it by the parametrization $z=-a+r(\theta)e^{i\theta}$ with 
$r(\theta)=a\cos\theta+\sqrt{1-a^2\sin^2\theta}$, and it turns out to be the elliptic integral  
$$\Tr(|S+a|-|S^*+a|)=\frac{2}{\pi}\int_0^{\frac{\pi}{2}}\sqrt{1-a^2\sin^2\theta}d\theta
=\frac{1}{\pi}\int_0^1\sqrt{\frac{1-a^2x}{x(1-x)}}dx.$$
For $p=1$ and $a>1$, 
$$\Tr(|S+a|-|S^*+a|)=\frac{2}{\pi}\int_0^{\sin^{-1}\frac{1}{a}}\sqrt{1-a^2\sin^2\theta}d\theta
=\frac{1}{\pi}\int_0^1\sqrt{\frac{1-x}{x(a^2-x)}}dx.$$

The above computation with $a=1$ can be generalized as follows. 
Note that if $F$ is a polynomial of degree $n$ whose zeros are on the unit circle, the real part 
of $e^{it}F'(e^{it})/F(e^{it})$ is $\frac{n}{2}$.  
This implies that for $f(t)=F(e^{it})$, we have 
$$\Tr\left(|T_f|^p-|T_f^*|^p\right)=\frac{1}{2\pi}\int_0^{2\pi}|F(e^{it})|^p\frac{n}{2}dt.$$
In particular, 
\begin{align*}
\lefteqn{\Tr\left(\left|\sum_{k=0}^{n-1} S^k\right|-\left|\sum_{k=0}^{n-1} {S^*}^k\right|\right)
=\frac{n-1}{4\pi}\int_0^{2\pi}\left|\sum_{k=0}^{n-1}e^{ikt}\right|dt=\frac{n-1}{4\pi}\int_0^{2\pi}\left|\frac{\sin\frac{nt}{2}}{\sin\frac{t}{2}}\right|dt} \\
 &=\frac{n-1}{4\pi}\sum_{k=0}^{n-1}(-1)^k\int_{\frac{2\pi k}{n}}^{\frac{2\pi(k+1)}{n}}\frac{\sin\frac{nt}{2}}{\sin\frac{t}{2}}dt
 =\frac{n-1}{4\pi}\sum_{k=0}^{n-1}(-1)^k\int_{\frac{2\pi k}{n}}^{\frac{2\pi(k+1)}{n}}\sum_{l=0}^{n-1}e^{i(l-\frac{n-1}{2})t}dt.
\end{align*}
For even $n$, this is 
$$\frac{n-1}{\pi}\sum_{j=0}^{\frac{n-2}{2}}\frac{\tan\frac{(j+\frac{1}{2})\pi}{n}}{j+\frac{1}{2}}.$$
For odd $n$, it is
$$\frac{n-1}{2n}+\frac{n-1}{\pi}\sum_{j=1}^{\frac{n-1}{2}}\frac{ \tan\frac{j\pi}{n}}{j}.$$
\end{example}

\section*{Acknowledgments}
The author would like to thank Fumio Hiai and Hideki Kosaki for stimulating discussions and useful comments on trace inequalities,   
and Xiang Tang for answering a question on Toeplitz operators on the weighted Bergman spaces.  

\end{document}